\begin{document}

\title{On a Class of Gradient Almost Ricci Solitons
}


\author{Sinem G\"{u}ler        
}


\institute{S.  G\"{u}ler \at
              Istanbul Sabahattin Zaim University, Department of Industrial Engineering, Halkal\i, Istanbul, Turkey \\
              \email{sinem.guler@izu.edu.tr}           
}

\date{Received: date / Accepted: date}

\maketitle

\begin{abstract}
In this study, we provide some classifications for  half-conformally flat gradient $f$-almost Ricci solitons, denoted by $(M, g, f)$,  in both Lorentzian and neutral signature. First, we prove that if $||\nabla f||$ is a non-zero constant,  then $(M, g, f)$ is locally isometric to a {warped product} of the form $I \times_{\varphi} N$, where $I \subset \mathbb{R}$ and $N$ is of constant sectional curvature. On the other hand, if $||\nabla f|| = 0$,  then it is locally a {Walker manifold}. Then, we construct an example of  4-dimensional steady gradient $f$-almost Ricci solitons in neutral signature. At the end,  we  give more physical applications of gradient Ricci solitons endowed with the standard static spacetime metric.
\keywords{Ricci soliton \and Gradient Ricci soliton \and Gradient $h$-almost Ricci soliton \and Half-conformally flat manifold \and Walker manifold \and Standard static spacetime metric}
\subclass{53C21 \and 53C50 \and 53C25}
\end{abstract}

\section{Introduction}
\label{intro}

Hamilton  introduced the concept of the Ricci flow to prove the Poincare Conjecture in the late of 20th century \cite{hamilton}.  Poincare Conjecture was one of the very deep unsolved problem which aims to classify  all compact three dimensional manifolds.  In the  1900's, Poincare asked if a simply-connected closed three  manifold is necessarily the three sphere $\mathbb{S}^3$. For this purpose, Hamilton introduced the Ricci flow as a partial differential equation 
$\frac{\partial g(t)}{\partial t}=-2Ric(g(t))$, which evolves the metric in a Riemannian manifold to make it rounder. By choosing harmonic coordinates, it can be seen that the Ricci flow is a heat-type equation and the characteristic property of such equations is the maximum principle, which guarantees that this rounding metric happens in some specific case. Hence, as in the expectation of Hamilton, after the evoluation of the metric, the manifold will become of constant curvature. 
 Moreover, Hamilton proved that for any smooth metric $g_0$ on a compact Riemannian manifold $M^n$, there exists a unique solution $g(t)$ of {Ricci flow}  defined on some interval $[0, \varepsilon)$, for some $\varepsilon > 0$, with the initial condition $g(0) = g_0$, \cite{hamilton}. Even in the non-compact case, a complete solution of {Ricci flow} exists when the sectional curvatures of $g_0$ are bounded, \cite{shi}.

Let $(M^n, g(t))$ be a solution of the Ricci flow and suppose that $\varphi_t : M^n \rightarrow M^n$ is a time-dependent family of diffeomorphisms satisfying $\varphi_0 = Id$ and $\sigma(t)$ is a time-dependent scale factor satisfying  $\sigma(0) = 1$. If  \
$g(t) = \sigma(t)\varphi_t^*g(0) $
holds,  then the solution $(M^n, g(t))$ is called a Ricci soliton.
Thus, one can regard the Ricci solitons as the  fixed points of the Ricci flow, which change only by a  diffeomorphism and a rescaling. If we take the derivative of the last relation at $t=0$, by using the definition of the Lie derivative, the {Ricci flow} equation and the initial conditions for $\varphi_t$ and $\sigma(t)$, we obtain
\begin{equation}
\label{Ricci soliton}
Ric+\frac{1}{2}\mathcal{L}_{V}g=\lambda g,
\end{equation}
where $V=\frac{d\varphi_t}{dt}$, $\mathcal{L}_{V}g$ is the Lie derivative of the metric $g$ in the direction of $V$ and $\sigma^{'}(0)=2\lambda$, for some real constant $\lambda$. As a result, the triple $(M,g,V)$ is said to be a Ricci soliton, where $V$ is the potential field and $\lambda$ is the soliton constant.   Also, there is a terminology according to the sign of $\lambda$: a Ricci soliton is {steady}  if $\lambda=0$, and {expanding} or {shrinking} if $\lambda >0$ or $\lambda <0$, respectively. If the potential vector field $X$ is gradient, that is $X=\nabla f$, for some smooth function $f$, then the equation $\eqref{Ricci soliton}$ reduces to the form
\begin{equation}
\label{gradient Ricci soliton}
Ric+Hessf=\lambda g.
\end{equation}
Thus the triple $(M,g,f)$ satisfying $\eqref{gradient Ricci soliton}$ is called a gradient Ricci soliton and $f$ is called a potential field. Mostly, in a gradient Ricci soliton $\lambda$ is a real constant. But, we are also interested in the general case in which $\lambda$ is a smooth function on $M$. In that case, $(M,g,f)$ is called a gradient almost Ricci soliton.
In these regards, Ricci solitons and gradient Ricci solitons are natural generalizations of Einstein manifolds. In literature, there are many other different generalizations of Einstein manifolds and gradient Ricci solitons, such as quasi Einstein manifolds \cite{case1,case2,bejguler}, $(m,\rho)$-quasi Einstein manifolds \cite{guler,huang,ghosh}, generalized quasi Einstein manifolds \cite{catino,guler.1,ahmad} and etc. 


In this paper, we will deal with another generalization of Ricci solitons, given as follows:

An {$h$-almost Ricci soliton} is a complete Riemannian manifold $(M^n, g)$ with
a vector field $X \in \chi(M)$ and two smooth real valued  functions $\lambda $ and $h$ satisfying the equation \cite{gomes}
\begin{equation}
\label{h-almost}
Ric +\frac{h}{2}\mathcal{L}_Xg=\lambda g. 
\end{equation}
For the sake of convenience, we denote an $h$-almost Ricci soliton by $(M^n, g,X, h, \lambda )$.   If $\lambda$ is constant, it is called an {$h$-Ricci soliton}.  When $X=\nabla f$  for some smooth function $f$, we call $(M^n, g, \nabla f, h, \lambda )$ a {gradient $h$-almost Ricci soliton} with potential function $f$. In this case, the fundamental equation $\eqref{h-almost}$ can be rewritten as \cite{yun}:
\begin{equation}
\label{gradient h-almost}
Ric +h.Hess f=\lambda g
\end{equation}
that  constitutes the main theme of this work. This kind of manifolds are closely related to the warped products so we should mention about the warped products, \cite{oneill}: 
For two pseudo-Riemannian manifolds  $(B,g_B)$ and $(F,g_F)$, the warped product $B\times_{\varphi} F$ with respect to warping function  $\varphi \in \mathcal{C}_{>0}^{\infty}(B)$ is defined as the product manifold  $B\times F$ endowed with the metric $g=g_B+\varphi^2 g_F$. After some straightforward calculations, all components of the Ricci tensor of $(B\times_{\varphi}F, g)$ can be found as follows \cite{oneill}
\begin{itemize}
\item[(1)] ${Ric}(X,Y)=Ric_B(X,Y)-\frac{m}{\varphi}Hess{\varphi}(X,Y)$,
\item[(2)] ${Ric}(X,V)=0$,
\item[(3)] ${Ric}(V,W)=Ric_F(V,W)-\Big[\frac{\Delta \varphi}{\varphi}+(m-1)\frac{||grad \varphi||^2}{\varphi^2}\Big]{g}(V,W)$,
\end{itemize}
for all horizontal vectors $X, Y$ and vertical vectors $V,W$ where $Hess \varphi =\nabla d\varphi$ denotes the Hessian of
a smooth function $\varphi$ on  $(B, g_B)$, $m > 1$ is the dimension of  $(F, g_F)$.
Einstein and quasi Einstein manifolds are closely related to the warped products. For example, 
if $B\times_{\varphi}F$ is an Einstein warped product, then $(B,g,f,\alpha)$ is quasi Einstein, with $f=-dimF (log \varphi)$ and $\alpha=\frac{1}{dimF}$. Moreover, if we have a suitable fiber, the converse of this statement is also true. Thus, this is a natural way to construct a quasi Einstein manifolds and warped product metrics.

Additionally, if  $B\times_{\varphi} F$ satisfies $Ric(X,Y)=\lambda g(X,Y)$ for all $X,Y \in \chi(B)$, then the base manifold $(B,g_B,\nabla \varphi, -\frac{m}{\varphi}, \lambda)$ is the gradient  $(-\frac{m}{\varphi})$ almost Ricci soliton. Conversely, if fiber $(F, g_F)$ is Einstein with $Ric_F=\mu g_F$, then the necessary and sufficient condition for 
 $B\times_{\varphi} F$ to be an Einstein with $Ric =\lambda g$  is that the base $(B, g_B)$ is a gradient  $(-\frac{m}{\varphi})$-almost Ricci soliton with potential function $\varphi$ and soliton function $\lambda$ satisfying
$\mu = \varphi \Delta \varphi + (m- 1)||\nabla \varphi||^2 + \lambda \varphi^2$.

Motivated by these results, in this study first we analyse the half conformally flat (i.e. self-dual or anti-self-dual) four dimensional gradient $f$-almost Ricci solitons (that is, throughout this study the function $h$ is identified  by the potential function $f$). Indeed, we restrict ourselves to the particular case in which $h$ equals to the potential function $f$. In \cite{garcia}, half conformally flat gradient Ricci almost solitons are investigated, showing that they are locally conformally flat in a neighbourhood of any point where the gradient of the potential function is
non-null. In opposition, if the gradient of the potential function is null, then the soliton is a Walker manifold. The first case corresponds to non-degenerate level hypersurfaces, whereas the second case corresponds to degenerate level hypersurfaces and gives rise to the  isotropic solitons, \cite{garcia}. From the inspiration of these results, we will extend this problem to the gradient $f$-almost Ricci solitons. In the last part,  we also give more physical applications of gradient Ricci solitons by using  the standard static spacetime metric.  We provide the characterizations of certain manifolds satisfying Ricci-Hessian class type equations endowed with standard static spacetime metric, with respect to their
fundamental equations.

\section{Preliminaries}

Let $(M, g)$ be a pseudo-Riemannian manifold with Levi-Civita connection $\nabla$. The  curvature operator is defined by $R(X,Y)=[\nabla_X,\nabla_Y]-\nabla_{[X,Y]}$, for any $X,Y\in \chi(M)$. Then the Ricci tensor $Ric$ and the scalar curvature $r$ are defined by $Ric(X,Y)=trace\{Z \rightarrow R(X,Y)Z\}$ and $r =trace\{Ric\}$ respectively, where
$Q$ denotes the Ricci operator defined by $g(QX, Y)=Ric(X,Y)$. As we mentioned before, $Hessf$ denotes the Hessian tensor defined by $Hessf (X,Y)=(\nabla_Xdf )(Y)=XY(f ) - (\nabla_XY)(f )$.

\subsection{Some key lemmas about gradient $f$-almost Ricci solitons: }
First we give the following  which will help us to characterize the gradient $f$-almost Ricci solitons. 

\begin{lemma} \label{lemma:1}
Let $(M^n, g, \nabla f, f, \lambda )$ be a gradient  $f$-almost Ricci soliton. Then the following identities hold:
\begin{itemize}
\item[(1)] $r+f\Delta f=\lambda n$
\item[(2)] $\nabla r=-2 (\nabla f)\Delta f+2fRic(\nabla f)+2hes_f(\nabla f)+2(n-1)\nabla \lambda$
\item[(3)] $fR(X,Y,Z,\nabla f)=d\lambda(X)g(Y,Z)-d\lambda (Y)g(X,Z)\\ - [df(X)Hessf(Y,Z)-df(Y)Hessf(X,Z)]-[(\nabla_XRic)(Y,Z)-(\nabla_YRic)(X,Z)]$
\end{itemize}
for all vector fields $X,Y,Z$ on $M$. 
\end{lemma}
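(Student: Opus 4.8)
The plan is to obtain all three identities directly from the defining relation $Ric + f\,\mathrm{Hess}f = \lambda g$ (the case $h=f$ of \eqref{gradient h-almost}) by tracing it and then differentiating it, the only external inputs being metric compatibility $\nabla g = 0$, the commutation (Ricci) identity for the third covariant derivative of a scalar function, and the contracted second Bianchi identity $\mathrm{div}\,Ric = \tfrac12\,dr$.

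Identity (1) is immediate: taking the $g$-trace of $Ric + f\,\mathrm{Hess}f = \lambda g$ and using $\mathrm{tr}_g Ric = r$, $\mathrm{tr}_g(f\,\mathrm{Hess}f) = f\,\Delta f$ and $\mathrm{tr}_g(\lambda g) = \lambda n$ gives $r + f\,\Delta f = \lambda n$.

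For (3) I would write $Ric = \lambda g - f\,\mathrm{Hess}f$ and differentiate covariantly; using $\nabla g = 0$ and the product rule on $f\,\mathrm{Hess}f$ yields $(\nabla_X Ric)(Y,Z) = d\lambda(X)\,g(Y,Z) - df(X)\,\mathrm{Hess}f(Y,Z) - f\,(\nabla_X\mathrm{Hess}f)(Y,Z)$. Antisymmetrizing in $X,Y$ and invoking the commutation identity for the third derivatives of $f$, i.e.\ $(\nabla_X\mathrm{Hess}f)(Y,Z) - (\nabla_Y\mathrm{Hess}f)(X,Z) = g\big(R(X,Y)\nabla f,\,Z\big)$, converts the Hessian difference into a curvature term; solving for the curvature contraction produces (3). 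The terms $df(X)\,\mathrm{Hess}f(Y,Z)$ that do not appear in the classical gradient Ricci soliton case ($h\equiv 1$) arise precisely from this product rule, and keeping track of the curvature sign convention fixed in the Preliminaries is the only point that requires genuine care.

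Finally, (2) follows by contracting (3) over the pair $(X,Z)$ in a (pseudo-)orthonormal frame: the $g$-terms contribute $(1-n)\,d\lambda(Y)$, the Hessian terms contribute $-\mathrm{Hess}f(\nabla f, Y) + \Delta f\,df(Y)$, the Ricci-difference contributes $(\mathrm{div}\,Ric)(Y) - dr(Y) = -\tfrac12\,dr(Y)$ by the contracted second Bianchi identity, and the left-hand side becomes $\pm f\,Ric(\nabla f, Y)$; raising the index and rearranging gives exactly (2), where $\mathrm{hes}_f$ is the $(1,1)$ Hessian operator and $Ric(\nabla f)=Q\nabla f$. Alternatively one can reach (2) without using (3), by taking the divergence of $Ric = \lambda g - f\,\mathrm{Hess}f$, applying the Bochner identity $\mathrm{div}(\mathrm{Hess}f) = d(\Delta f) + Ric(\nabla f,\cdot)$, and eliminating $d(\Delta f)$ through the differential of identity (1). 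Apart from the sign bookkeeping in (3), every step is a routine trace or divergence computation, so I expect that to be the main (and only) obstacle.
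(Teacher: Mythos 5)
Your proposal is correct and uses essentially the same ingredients as the paper's (very terse) proof: trace the fundamental equation for (1), the commutation (Ricci) identity applied to $Ric=\lambda g-f\,\mathrm{Hess}f$ for the curvature identity, and a contraction with the contracted second Bianchi identity for the scalar-curvature identity. The only difference is order of presentation — the paper obtains (2) from (1) and the Ricci identity and then (3) from (1)--(2), while you derive (3) first and contract it to get (2) (your alternative divergence/Bochner route is exactly the paper's implicit computation) — which is immaterial.
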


\begin{proof}
Contracting the fundamental equation of $(M^n, g, \nabla f, f, \lambda )$, the first assertion is directly obtained. If we combine (1) with the Ricci identity, the second assertion can be verified. Thus, (1)-(2) and the Ricci identity yield the last relation.

\end{proof}

The Weyl tensor $\mathcal{W}$ and the Cotton tensor $C$ are defined as follows:
\begin{align}
\label{weyl}
\mathcal{W}(X,Y)Z=&R(X,Y)Z+\frac{r}{(n-1)(n-2)}[g(X,Z)Y-g(Y,Z)X]\\ \notag
+&\frac{1}{n-2}[Ric(Y,Z)X-Ric(X,Z)Y+g(Y,Z)QX-g(X,Z)QY],
\end{align}
\begin{align}
\label{cotton}
C(X,Y,Z)=&(\nabla_XRic)(Y,Z)-(\nabla_YRic)(X,Z)\\ \notag
-&\frac{1}{2(n-1)}[g(Y,Z)dr(X)-g(X,Z)dr(Y)],
\end{align}
for all vector fields $X,Y,Z$ on $M$. Then by using the equations $\eqref{weyl}$ and $\eqref{cotton}$ and Lemma $\ref{lemma:1}$, we can immediately obtain the following result that will be given without proof:

\begin{lemma}\label{lemma:2}
Let $(M^n, g, \nabla f, f, \lambda )$ be a gradient  $f$-almost Ricci soliton. Then
\begin{align}\footnotesize 
\label{lemma 2}
\mathcal{W}(X,Y,Z,\nabla f)=&\frac{1}{(n-1)(n-2)}[Ric(X,\nabla f)g(Y,Z)-Ric(Y,\nabla f)g(X,Z)]\\ \notag
-&\frac{r}{(n-1)(n-2)}[g(X,\nabla f)g(Y,Z)-g(Y,\nabla f)g(X,Z)]\\ \notag
+&\frac{1}{(n-2)}[g(X,\nabla f)Ric(Y,Z)-g(Y,\nabla f)Ric(X,Z)]\\ \notag
-&\frac{1}{f}[df(X)Hessf(Y,Z)-df(Y)Hessf(X,Z)] \\ \notag 
+&\frac{\Delta f}{f(n-1)}[df(X)g(Y,Z)-df(Y)g(X,Z)] \\ \notag 
-&\frac{1}{2f(n-1)}[X(||\nabla f||^2)g(Y,Z)-Y(||\nabla f||^2)g(X,Z)]-\frac{1}{f}C(X,Y,Z)
\end{align}
for all vector fields $X,Y,Z$ on $M$.
\end{lemma}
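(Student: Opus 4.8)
The plan is to derive \eqref{lemma 2} by a direct computation that starts from the definitions \eqref{weyl} and \eqref{cotton} and then feeds in the three identities of Lemma~\ref{lemma:1}. First I would take the inner product of \eqref{weyl} with $\nabla f$ in its last argument; using $g(QX,\nabla f)=Ric(X,\nabla f)$ this gives
\begin{align*}
\mathcal{W}(X,Y,Z,\nabla f)=&\,R(X,Y,Z,\nabla f)+\frac{1}{n-2}\big[g(Y,Z)Ric(X,\nabla f)-g(X,Z)Ric(Y,\nabla f)\big]\\
&+\frac{1}{n-2}\big[Ric(Y,Z)g(X,\nabla f)-Ric(X,Z)g(Y,\nabla f)\big]\\
&+\frac{r}{(n-1)(n-2)}\big[g(X,Z)g(Y,\nabla f)-g(Y,Z)g(X,\nabla f)\big].
\end{align*}
Rewriting the last two lines in the form used in \eqref{lemma 2}, they are already two of its summands --- the term $\tfrac{1}{n-2}[g(X,\nabla f)Ric(Y,Z)-g(Y,\nabla f)Ric(X,Z)]$ and the pure metric term with coefficient $-\tfrac{r}{(n-1)(n-2)}$ --- so all that remains is to unpack $R(X,Y,Z,\nabla f)$ (which will also recombine with the leftover Ricci term $\tfrac{1}{n-2}[g(Y,Z)Ric(X,\nabla f)-g(X,Z)Ric(Y,\nabla f)]$ on the first line).

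To do that I would divide Lemma~\ref{lemma:1}(3) by $f$, which rewrites $R(X,Y,Z,\nabla f)$ in terms of $d\lambda$, the Hessian contribution $-\tfrac1f[df(X)Hessf(Y,Z)-df(Y)Hessf(X,Z)]$ (already a summand of \eqref{lemma 2}), and the antisymmetrized covariant derivative $(\nabla_XRic)(Y,Z)-(\nabla_YRic)(X,Z)$. Substituting the definition \eqref{cotton} of the Cotton tensor for this last piece yields the $-\tfrac1f C(X,Y,Z)$ summand together with a term $-\tfrac{1}{2f(n-1)}[g(Y,Z)dr(X)-g(X,Z)dr(Y)]$. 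Now the right-hand side still carries a $d\lambda$-term (from Lemma~\ref{lemma:1}(3)) and a $dr$-term (from the Cotton substitution), which I would eliminate at once by solving Lemma~\ref{lemma:1}(2) for $g(\nabla\lambda,X)$ and inserting it, so that the coefficient of $g(Y,Z)$ collapses to
\[
\frac1f\Big(d\lambda(X)-\frac{1}{2(n-1)}dr(X)\Big)=\frac{1}{n-1}\Big(\frac{\Delta f}{f}\,df(X)-Ric(X,\nabla f)-\frac{1}{2f}X(||\nabla f||^2)\Big),
\]
where the elementary identity $Hessf(\nabla f,X)=\tfrac12X(||\nabla f||^2)$ has been used. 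Antisymmetrizing in $X,Y$, this is exactly the sum of the $\tfrac{\Delta f}{f(n-1)}[df(X)g(Y,Z)-df(Y)g(X,Z)]$ and $-\tfrac{1}{2f(n-1)}[X(||\nabla f||^2)g(Y,Z)-Y(||\nabla f||^2)g(X,Z)]$ summands of \eqref{lemma 2}, plus one more Ricci term carrying the coefficient $-\tfrac{1}{n-1}$.

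It then only remains to collect coefficients. The Ricci term $g(Y,Z)Ric(X,\nabla f)-g(X,Z)Ric(Y,\nabla f)$ appears twice, once with coefficient $\tfrac1{n-2}$ (from the Weyl contraction) and once with $-\tfrac1{n-1}$ (from the step above), and since $\tfrac1{n-2}-\tfrac1{n-1}=\tfrac1{(n-1)(n-2)}$ the two merge into the first summand on the right of \eqref{lemma 2}; all other terms are already in the stated form. The computation is conceptually routine, and the step most prone to error --- which is where I would spend the most care --- is precisely this simultaneous elimination of $d\lambda$ and $dr$: one has to keep track of the factors of $f$ produced by dividing Lemma~\ref{lemma:1}(3), apply Lemma~\ref{lemma:1}(2) with the correct signs, and then reconcile the $\tfrac1{n-2}$ and $\tfrac1{n-1}$ coefficients of the Ricci terms so that they genuinely collapse to $\tfrac1{(n-1)(n-2)}$.
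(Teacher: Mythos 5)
Your computation is correct and follows exactly the route the paper indicates (the paper states Lemma~\ref{lemma:2} without written proof, citing only the definitions \eqref{weyl}, \eqref{cotton} and Lemma~\ref{lemma:1}): contract the Weyl decomposition with $\nabla f$, substitute Lemma~\ref{lemma:1}(3) for $R(X,Y,Z,\nabla f)$, trade the antisymmetrized $\nabla Ric$ for the Cotton tensor, and eliminate $d\lambda$ and $dr$ via Lemma~\ref{lemma:1}(2) together with $Hessf(\nabla f,X)=\tfrac12 X(||\nabla f||^2)$. The coefficient bookkeeping, including the merge $\tfrac{1}{n-2}-\tfrac{1}{n-1}=\tfrac{1}{(n-1)(n-2)}$ for the $Ric(\cdot,\nabla f)$ terms, checks out against \eqref{lemma 2}.
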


\subsection{Half-conformal flatness of 4-dimesional manifolds}

Let $(V, <.,.>)$ be an inner product vector space and let $<<.,.>>$ be the induced inner product on the space of two forms $\Lambda^2(V)$. For  a given orientation $vol_V$, the Hodge star operator $*:\Lambda^2(V)\rightarrow \Lambda(V)$ given by $\alpha \wedge *\beta=<<\alpha,\beta>>vol_V$ satisfies $*^2=Id$ and induces a decomposition $\Lambda^2=\Lambda_{+}^2 \oplus \Lambda_{-}^2$, where $\Lambda_{+}^2=\{\alpha \in \Lambda^2: *\alpha=\alpha\}$ and $\Lambda_{-}^2=\{\alpha \in \Lambda^2: *\alpha=-\alpha\}$. $\Lambda_{+}^2$ denotes the space of self-dual and $\Lambda_{-}^2$ denotes the space of anti-self-dual two forms. Let $W:\Lambda^2(V)\rightarrow \Lambda(V)$ be the corresponding endomorphism associated to the Weyl conformal tensor. Then $W$ can be decomposed under the action of $SO(V,<.,.>)$ as $W=W^{+} \oplus W^-$, where $W^{+}=\frac{W+*W}{2}$ is the self-dual and   $W^{-}=\frac{W-*W}{2}$ is the anti-self-dual Weyl conformal curvature tensor. 
Half conformally flat metrics are  known as self-dual or anti-self dual if
$W^{-} = 0$ or $W^{+} = 0$, respectively.

We will use the following characterization of self-dual algebraic curvature tensors in sequel:

\begin{lemma}\label{lemma:brozos}\cite{garcia}
Let $(V, <.,.>)$ be an oriented four-dimensional inner product space of neutral signature. Then the followings hold:
\begin{enumerate}
\item[(1)] An algebraic curvature tensor $R$ is self-dual if and only if for any positively oriented orthonormal basis $\{e_1,e_2,e_3,e_4\}$ ,
\begin{equation}
\label{lemma brozos 1}
W(e_1,e_i,X,Y)=\sigma_{ijk}\varepsilon_j\varepsilon_kW(e_j,e_k,X,Y), \ \ \ \forall X,Y \in V
\end{equation}
for $i,j,k\in \{2,3,4\}$, where $\sigma_{ijk}$ denotes the signature of the corresponding permutation. 
\item[(2)] An algebraic curvature tensor $R$ is self-dual if and only if for any positively oriented pseudo-orthonormal basis $\{T,U,V,W\}$   (i.e., the non-zero inner products are $<T,V>=<U,W>=1$) and for every $X,Y\in V$,
\begin{equation}
\label{lemma brozos 2}
W(T,V,X,Y)=W(U,W,X,Y), \ \ W(T,W,X,Y)=0, \ \ W(U,V,X,Y)=0
\end{equation}
for $i,j,k\in \{2,3,4\}$, where $\sigma_{ijk}$ denotes the signature of the corresponding permutation. 
\end{enumerate}
\end{lemma}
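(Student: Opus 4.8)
The plan is to reduce both assertions to a single intrinsic fact --- that self-duality is equivalent to the vanishing of the Weyl operator on the space of anti-self-dual two-forms --- and then to write that vanishing out in each of the two adapted frames. Viewed as an endomorphism $W\colon\Lambda^2(V)\to\Lambda^2(V)$, the Weyl tensor is self-adjoint for the induced inner product $\langle\langle\cdot,\cdot\rangle\rangle$, thanks to the pair symmetry $W(X,Y,Z,T)=W(Z,T,X,Y)$. Since $*^2=\mathrm{Id}$ gives the orthogonal splitting $\Lambda^2=\Lambda^2_+\oplus\Lambda^2_-$, self-adjointness yields the chain $W^-=\tfrac12(W-*W)=0 \iff \mathrm{Im}(W)\subseteq\Lambda^2_+ \iff \Lambda^2_-\subseteq\ker W$; that is, $R$ is self-dual if and only if $W$ annihilates every anti-self-dual two-form. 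This last condition is frame-independent, so it suffices to verify the stated component identities in a single positively oriented frame, and conversely self-duality forces them in every positively oriented frame.

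\emph{Orthonormal case.} Given a positively oriented orthonormal basis $\{e_1,e_2,e_3,e_4\}$ with $\varepsilon_i=\langle e_i,e_i\rangle$, I would first compute the Hodge star on the basis $\{e_i\wedge e_j\}$ of $\Lambda^2$: since the metric has determinant $+1$ in neutral signature, $*(e_i\wedge e_j)=\mathrm{sgn}(i,j,k,l)\,\varepsilon_i\varepsilon_j\,e_k\wedge e_l$, where $\{k,l\}$ is the complement of $\{i,j\}$. Using $\varepsilon_1\varepsilon_2\varepsilon_3\varepsilon_4=1$, one checks that the two-forms $\omega_i:=e_1\wedge e_i-\sigma_{ijk}\varepsilon_j\varepsilon_k\,e_j\wedge e_k$, with $\{i,j,k\}=\{2,3,4\}$, satisfy $*\omega_i=-\omega_i$ and form a basis of $\Lambda^2_-$ (having fixed the orientation so that the ``minus'' combinations, rather than the ``plus'' ones, are anti-self-dual). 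By multilinearity, the relations $W(\omega_i,\,\cdot\,)=0$ for $i=2,3,4$ are precisely \eqref{lemma brozos 1}.

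\emph{Null-frame case.} For a positively oriented pseudo-orthonormal basis $\{T,U,V,W\}$ with $\langle T,V\rangle=\langle U,W\rangle=1$ and all other inner products zero, I would again compute the Hodge star on the bivectors $T\wedge V,\,U\wedge W,\,T\wedge U,\,V\wedge W,\,T\wedge W,\,U\wedge V$ using $\langle X_1\wedge X_2,Y_1\wedge Y_2\rangle=\langle X_1,Y_1\rangle\langle X_2,Y_2\rangle-\langle X_1,Y_2\rangle\langle X_2,Y_1\rangle$; this shows that $\Lambda^2_-$ is spanned by $T\wedge V-U\wedge W$, $T\wedge W$ and $U\wedge V$ (up to the orientation convention selecting the anti-self-dual half). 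Imposing that $W$ vanish on these three bivectors gives exactly $W(T,V,X,Y)=W(U,W,X,Y)$, $W(T,W,X,Y)=0$ and $W(U,V,X,Y)=0$ for all $X,Y$, i.e.\ \eqref{lemma brozos 2}; the converse direction is again the frame-independent argument above.

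The only genuine difficulty is bookkeeping: pinning down the signs in the Hodge-star action in neutral signature and, correlatively, making sure that the three-dimensional eigenspace one labels $\Lambda^2_-$ is the one compatible with the fixed orientation --- it is very easy to interchange self-dual and anti-self-dual here. Once explicit bases of $\Lambda^2_-$ in the two frames are available, passing to curvature components is immediate.
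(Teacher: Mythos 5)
The paper itself offers no proof of this lemma: it is quoted from \cite{garcia}, so there is nothing internal to compare against, and your proposal is essentially the standard argument behind the cited result. Your reduction is correct: by the pair symmetry of the Weyl tensor the associated operator on $\Lambda^2$ is self-adjoint for $\langle\langle\cdot,\cdot\rangle\rangle$, the splitting $\Lambda^2=\Lambda^2_+\oplus\Lambda^2_-$ is orthogonal, and hence $W^-=\tfrac12(W-*W)=0$ is equivalent to $\Lambda^2_-\subseteq\ker W$, a frame-independent condition that also disposes of the ``for any positively oriented basis'' phrasing. The sign bookkeeping you defer does close with the paper's convention $\alpha\wedge *\beta=\langle\langle\alpha,\beta\rangle\rangle\,vol_V$: since $\varepsilon_1\varepsilon_2\varepsilon_3\varepsilon_4=+1$ in neutral signature, one gets $*(e_1\wedge e_i)=\sigma_{ijk}\varepsilon_1\varepsilon_i\,e_j\wedge e_k$ and $*(e_j\wedge e_k)=\sigma_{ijk}\varepsilon_j\varepsilon_k\,e_1\wedge e_i$, so your $\omega_i=e_1\wedge e_i-\sigma_{ijk}\varepsilon_j\varepsilon_k\,e_j\wedge e_k$ indeed satisfy $*\omega_i=-\omega_i$ and span $\Lambda^2_-$, which yields \eqref{lemma brozos 1}; in the null frame, writing $e_1=(T-V)/\sqrt2$, $e_2=(U-W)/\sqrt2$, $e_3=(T+V)/\sqrt2$, $e_4=(U+W)/\sqrt2$ turns these $\omega_i$ into $T\wedge V-U\wedge W$, $U\wedge V-T\wedge W$ and $T\wedge W+U\wedge V$, so $\Lambda^2_-=\operatorname{span}\{T\wedge V-U\wedge W,\;T\wedge W,\;U\wedge V\}$ and \eqref{lemma brozos 2} follows. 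Two points worth making explicit in a final write-up: you need nondegeneracy of $\langle\langle\cdot,\cdot\rangle\rangle$ on $\Lambda^2$ to pass from $W(\omega_i)=0$ as a two-form to the componentwise identities for all $X,Y$, and you should fix once and for all which eigenspace of $*$ your orientation convention labels $\Lambda^2_-$, since interchanging the halves would swap ``self-dual'' and ``anti-self-dual'' throughout.
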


\section{Half-conformally flat gradient $f$-almost Ricci solitons}

The aim of this section is to analyze four-dimensional half conformally flat (i.e. self-dual or anti-self-dual) gradient $f$-almost Ricci solitons. Since we work at the local sets, let $p\in M$ and orient $(M,g)$ on a neighborhood of $p$ so that it is self-dual.
\subsection{Non-Isotropic Case}
First, we consider non-isotropic half conformally flat gradient $f$-almost Ricci solitons. That is, $||\nabla f||\neq 0$ so the level sets of f are non-degenerate hypersurfaces. Here, we also assume that $\nabla f$ is of constant length. 
Since Cotton tensor is a constant multiple of divergence of Weyl tensor, by virtue of $\eqref{lemma 2}$ and $\eqref{lemma brozos 1}$, we can express the self-duality condition as follows:
\begin{align}
\label{half 1}
3&[Q(e_i)g(e_1,\nabla f)-Q(e_1)g(e_i,\nabla f)]+Ric(e_1,\nabla f)e_i-Ric(e_i,\nabla f)e_1\\ \notag
+&r[g(e_i,\nabla f)e_1-g(e_1,\nabla f)e_i]-\frac{6}{f}[g(\nabla f, e_1)\nabla_{e_i}\nabla f-g(\nabla f, e_i)\nabla_{e_1}\nabla f]\\ \notag
+&2\frac{\Delta f}{f}[g(\nabla f, e_1){e_i}-g(\nabla f, e_i)e_1]-\frac{1}{f}[e_1(||\nabla f||^2)e_i-e_i(||\nabla f||^2)e_1]\\ \notag 
=&\sigma_{ijk}\varepsilon_j \varepsilon_k\Big[3[Q(e_k)g(e_j,\nabla f)-Q(e_j)g(e_k,\nabla f)]+Ric(e_j,\nabla f)e_k-Ric(e_k,\nabla f)e_j\\ \notag 
+&r[g(e_k,\nabla f)e_j-g(e_j,\nabla f)e_k]-\frac{6}{f}[g(\nabla f, e_j)\nabla_{e_k}\nabla f-g(\nabla f, e_k)\nabla_{e_j}\nabla f]\\ \notag
+&2\frac{\Delta f}{f}[g(\nabla f, e_j){e_k}-g(\nabla f, e_k)e_j]-\frac{1}{f}[e_j(||\nabla f||^2)e_k-e_k(||\nabla f||^2)e_j]\Big]
\end{align}
for $i,j,k\{2,3,4\}$, where $\varepsilon_i=<e_i,e_i>$. Since $||\nabla f||\neq 0$, we can normalize $\nabla f$ to be a unit and complete it to an orthonormal frame $\{E_i:i=1,2,3,4\}$, where $E_1=\frac{\nabla f}{||\nabla f||}$. Then, normalizing $\eqref{half 1}$ with respect to this orthonormal frame, we obtain 
\begin{align}
\label{half 2}
3&Ric(E_i,Z)g(E_1,\nabla f)+Ric(E_1,\nabla f)g(E_i,Z)-Ric(E_i,\nabla f)g(E_1,Z)\\ \notag
-&rg(E_1,\nabla f)g(E_i,Z)-\frac{6}{f}g(\nabla f, E_1)g(\nabla_{E_i}\nabla f, Z) \\ \notag
+&2\frac{\Delta f}{f}g(\nabla f, E_1)g(E_i,Z)-\frac{1}{f}[E_1(||\nabla f||^2)g(E_i,Z)-E_i(||\nabla f||^2)g(E_1,Z)]\\ \notag
=&\sigma_{ijk}\varepsilon_j \varepsilon_k\Big[
Ric(E_j,\nabla f)g(E_k,Z)-Ric(E_k,\nabla f)g(E_j,Z) \\ \notag
-&\frac{1}{f}[E_j(||\nabla f||^2)g(E_k,Z)-E_k(||\nabla f||^2)g(E_j,Z)]\Big].
\end{align}
Now, first putting $Z=E_1$ in $\eqref{half 2}$ and using the fact that  $||\nabla f||$ is non-zero constant, we get 
\begin{equation}
\label{half 3}
Ric(E_i,E_1)=0, \ \ \forall \ i\in \{2,3,4\}
\end{equation}
which means that $\nabla f$ is an eigenvector of the Ricci operator. 

Next, putting $Z=E_j$ in $\eqref{half 2}$, we get
\begin{equation}
\label{half 4}
Ric(E_i,E_j)=0, \ \ \forall \ i\neq j.
\end{equation}
That is, the Ricci operator can be diagonalizable  on the basis  $\{E_1,E_2,E_3,E_4\}$.

Finally, setting $Z=E_i$ in $\eqref{half 2}$, we  obtain
\begin{align}
\label{half 5}
\varepsilon_1Ric(E_1,E_1)-r+ 3\varepsilon_iRic(E_i,E_i)-\frac{6}{f}Hessf(E_i,E_i)
-2\frac{\Delta f}{f}-\frac{1}{f}E_1(||\nabla f||)=0.
\end{align}
In view of $\eqref{half 5}$, the fundamental equation of gradient $f$-almost Ricci soliton yields
\begin{equation}
\label{half 6}
Hessf(E_i,E_i)=- \frac{1}{f}Ric(E_i,E_i)+\frac{\lambda}{f},  \ \ \forall \ i\in \{2,3,4\}.
\end{equation}
As a consequence of the last two equations, we obtain
\begin{equation}
\label{half 6.}
Hessf(E_i,E_i)=\gamma g(E_i,E_i)
\end{equation}
where $\gamma =\frac{f}{3f^2+6}[Ric(E_1,E_1)+3\lambda -r-2\frac{\Delta f}{f}]$. On the other hand, when $i\neq j$, $Ric(E_iE_j)=\frac{2}{f}Hessf(E_i,E_j)=0$.
Thus, the level hypersurface of $f$ are totally umbilical. Moreover, for $i\in \{2,3,4\}$,
$Hessf(E_i,E_1)=g(hes_f(E_1),E_i)=0$ and so $hes_f(E_1)=0$. Thus, the 1-dimensional distribution $Span\{E_1\}$ is totally geodesic. Then, by using the reference \cite{ponge-twisted}, we conclude that $(M,g)$ can be locally decomposed as a twisted product $I\times_{\varphi}F$. Also,  as the Ricci operator is diagonalizable,
the twisted product reduces to a warped product (see \cite{lopez-rio-twisted}). That is, $(M,g)$ is a 4-dimensional self-dual warped product manifold. Thus, it becomes also anti-self-dual. Therefore, it is necessarily locally conformally flat and as a consequence of this result, the fiber $F$ becomes an Einstein manifold. Hence we can state the following theorem:

\begin{theorem}\label{thm:1}
Let $(M, g, f )$ be a  four-dimensional half conformally flat gradient $f$-almost Ricci soliton of neutral signature. Then, if $||\nabla f||=$constant $\neq 0$,  then $(M, g)$ is locally isometric to a {warped product} of the form $I \times_{\varphi} N$, where $I \subset \mathbb{R}$ and $N$ is an Einstein manifold. Furthermore, $(M, g)$ is locally conformally flat.

\end{theorem}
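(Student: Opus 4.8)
The plan is to feed the contracted Weyl identity of Lemma \ref{lemma:2} into the algebraic self-duality characterisation of Lemma \ref{lemma:brozos}(1), and then to read off pointwise consequences by evaluating the resulting tensorial equation on the adapted orthonormal frame. Fix $p\in M$ and orient a neighbourhood of $p$ so that $(M,g)$ is self-dual. In dimension four the Cotton tensor is, up to a universal constant, the divergence of the Weyl tensor, so the condition $W^-=0$ is equivalent to \eqref{lemma brozos 1}; substituting the expression for $\mathcal W(X,Y,Z,\nabla f)$ from \eqref{lemma 2} into \eqref{lemma brozos 1} and clearing the $1/f$ factors yields the identity \eqref{half 1}. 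Since $\|\nabla f\|\neq 0$ near $p$, one may normalise $E_1=\nabla f/\|\nabla f\|$ and complete it to an orthonormal frame $\{E_1,E_2,E_3,E_4\}$; rewriting \eqref{half 1} in this frame (i.e.\ contracting the free index against an arbitrary $Z$) gives \eqref{half 2}.

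The second step extracts the geometric content of \eqref{half 2} by three specialisations of $Z$, each of which uses in an essential way that $\|\nabla f\|$ is a nonzero \emph{constant}, so that $E_a(\|\nabla f\|^2)=0$ for every $a$. Taking $Z=E_1$ forces $Ric(E_i,E_1)=0$ for $i\in\{2,3,4\}$, i.e.\ $\nabla f$ is an eigendirection of the Ricci operator; taking $Z=E_j$ with $j\neq i$ forces $Ric(E_i,E_j)=0$, so the Ricci operator is diagonalised by $\{E_a\}$; and taking $Z=E_i$ produces the scalar relation \eqref{half 5}. Substituting \eqref{half 5} back into the soliton equation $Ric+f\,Hessf=\lambda g$ gives \eqref{half 6} and hence \eqref{half 6.}, namely $Hessf(E_i,E_i)=\gamma\,g(E_i,E_i)$ with $\gamma$ independent of $i\in\{2,3,4\}$. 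Together with $Hessf(E_i,E_j)=\tfrac1f(\lambda g-Ric)(E_i,E_j)=0$ for distinct $i,j\in\{2,3,4\}$ and with $Hessf(E_1,E_i)=0$, this shows the level sets $\{f=\text{const}\}$ are totally umbilical and the line field $\mathrm{Span}\{\nabla f\}$ is totally geodesic (indeed $hes_f(E_1)=0$).

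The third step is the structural conclusion. The data ``totally geodesic one-dimensional distribution $\mathrm{Span}\{\nabla f\}$, orthogonal to the totally umbilical foliation by the level sets of $f$'' is precisely what the twisted-product decomposition result of \cite{ponge-twisted} requires in order to conclude that $(M,g)$ is locally a twisted product $I\times_{\varphi}F$ with $I\subset\mathbb R$; and since we have also shown the Ricci operator is diagonal in the adapted frame, the criterion of \cite{lopez-rio-twisted} upgrades this twisted product to an honest warped product $I\times_{\varphi}N$. Finally, a four-dimensional self-dual warped product over a one-dimensional base is automatically anti-self-dual (its two Weyl halves coincide), so $\mathcal W\equiv 0$ and $(M,g)$ is locally conformally flat; feeding $\mathcal W=0$ into the warped structure via O'Neill's formulas for the Ricci tensor of a warped product then forces the fibre $N$ to be of constant sectional curvature, in particular Einstein.

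I expect the main obstacle to be the first step: correctly reducing \eqref{lemma 2} plus the signed-permutation identity \eqref{lemma brozos 1} to the clean frame equation \eqref{half 2} without losing or duplicating terms, since that computation is where the contracted second Bianchi / Cotton-versus-Weyl bookkeeping lives. A secondary point requiring care is checking that the totally geodesic $+$ totally umbilical configuration genuinely meets the hypotheses of \cite{ponge-twisted} and \cite{lopez-rio-twisted} on the neighbourhood in question — which is legitimate precisely because $\|\nabla f\|$ is a nonzero constant, so $\nabla f$ never vanishes there. The soliton-specific input, namely using $Ric+f\,Hessf=\lambda g$ to trade Hessian terms for Ricci terms, is what lets the constancy of $\|\nabla f\|$ do its work and is the conceptual heart of the non-isotropic case.
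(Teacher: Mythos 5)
Your proposal follows essentially the same route as the paper's own argument: substituting the identity of Lemma~\ref{lemma:2} into the self-duality criterion \eqref{lemma brozos 1}, specialising $Z=E_1,E_j,E_i$ in the adapted frame to diagonalise the Ricci operator and obtain \eqref{half 6.}, then invoking \cite{ponge-twisted} and \cite{lopez-rio-twisted} to pass from a twisted to a warped product, and concluding local conformal flatness and the Einstein fibre exactly as in Section~3.1. The argument is correct and no substantive difference from the paper's proof needs to be noted.
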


\subsection{Isotropic Case}

In this case,  the level hypersurfaces of the potential function are now degenerate. That is, in contrast to the non-isotropic case, we have $||\nabla f||=0$. But, as $\nabla f \neq 0$, we can complete it to a local pseudo-orthonormal frame $\mathcal{B}=\{\nabla f, U,V, W\}$, i.e., the only non-zero components of the metric $g$ are
\begin{equation}
\label{half 7}
g(\nabla f, V)= g(U,W)=1.
\end{equation}  
Since $||\nabla f||=g(\nabla f, \nabla f)=0$, taking covariant derivative, we get for all $X\in TM$, $Hessf(\nabla f, X)=g(hes_f(\nabla f), X)=0$,  which yields $hes_f(\nabla f)=0$. Combining this with the fundamental equation of gradient $f$-almost Ricci soliton, we have 
\begin{equation}
\label{half 8}
Q(\nabla f)=\lambda \nabla f
\end{equation}
where $Q$ is the Ricci operator. Thus, $\nabla f$ is an eigenvector of the Ricci operator corresponding to the eigenvalue $\lambda$. 

Now, since $(M,g,f)$ is half conformally flat, without lose of generality we can assume that it is self-dual. Then by the equations   $\eqref{lemma:brozos}$ and $\eqref{lemma brozos 2}$, for the pseudo-orthonormal frame  $\mathcal{B}=\{\nabla f, U,V, W\}$, we have
\begin{equation}
\label{half 9}
W(\nabla f,V,X,Y)=W(U,W,X,Y), \ \ W(\nabla f,W,X,Y)=0, \ \ W(U,V,X,Y)=0,
\end{equation}
for all $ X, Y$. Setting $Y=\nabla f$ in the first equation of $\eqref{half 9}$ and using the equations $\eqref{lemma 2}$ and $\eqref{half 8}$, we get
\begin{equation}
\label{half 10}
W(\nabla f,V,X,\nabla f)=\Big(\frac{r-4\lambda}{6}+\frac{\Delta f}{3f} \Big) g(\nabla f,X).
\end{equation}
Putting $Y=\nabla f$ in the second equation of $\eqref{half 9}$ and using the equations $\eqref{lemma 2}$ and $\eqref{half 8}$,  we get $W(U,W,X,\nabla f)=0$. Combining the last relation with $\eqref{half 10}$, we have either  $r=4\lambda -2\frac{\Delta f}{f}$ or for all $X$, $g(\nabla f,X)=0$. But the last one gives rise to $\nabla f=0$, which is a contradiction. Thus, we get  $r=4\lambda -2\frac{\Delta f}{f}$. 

Similarly, putting $Y=\nabla f$ in the third equation of $\eqref{half 9}$ and using the equations $\eqref{lemma 2}$, $\eqref{half 8}$ and the fundamental equation of the gradient $f$-almost Ricci soliton, we get
\begin{equation}
\label{half 11}
Ric(U,X)=\lambda g(U,X), \ \ \forall \ X \ \in \ TM
\end{equation}
which shows that $U$ is an eigenvector of the Ricci operator corresponding to the eigenvalue $\lambda$. 

Finally, putting $X=V$ in the second equation of $\eqref{half 9}$ and using the equations $\eqref{lemma 2}$ and $\eqref{half 8}$, we get
\begin{align}
\label{half 12}
\frac{\lambda}{6}&g(Y,W)-\frac{r}{6}g(Y,W)- \frac{1}{2}Ric(V,W)g(Y,\nabla f)-\frac{1}{f}Hessf(Y,W)\\ \notag
+&\frac{1}{f}g(Y,\nabla f)Hessf(V,W)+\frac{\Delta f}{3f}g(Y,W)+Ric(Y,W)=0, \ \ \forall \ Y \ \in \ TM.
\end{align}
Now, putting $Y=W$ and $Y=\nabla f$ in $\eqref{half 12}$ and using the fundamental equation, respectively we obtain
\begin{equation}
\label{half 13}
Ric(W,W)=Ric(\nabla f, W)=0.
\end{equation}
As a consequence of these obtained results, the Ricci operator can be written as in the following form:
\begin{displaymath}
\mathbf{Q} =
\left( \begin{array}{cccc}
\lambda & 0 & \alpha & \beta \\
0 & \lambda & \beta & 0 \\
0 & 0 & \lambda & 0\\
0 & 0 & 0 & \lambda\\
\end{array} \right)
\end{displaymath}
for some non-zero functions $\alpha$ and $\beta$ on $M$.
Now,  we set $\mathcal{D}=\{\nabla f, U\}$, which is 2-dimensional null-distribution. Since  $\mathcal{B}=\{\nabla f, U,V, W\}$ is pseudo-orthonormal basis in which the only non-zero components of the metric $g(\nabla f, V)= g(U,W)=1$, we have
\begin{equation}
\label{half 14}
g(\nabla_X\nabla f, \nabla f)=g(\nabla_XU,U)=0.
\end{equation}
Also, by using the fundamental equation of gradient $f$-almost Ricci soliton and $\eqref{half 11}$, we have $hes_f(U)=0$. Thus, we obtain
\begin{equation}
\label{half 15}
g(\nabla_X\nabla f, U)=g(\nabla_XU,\nabla f)=0.
\end{equation}
Therefore, $\nabla \mathcal{D} \subset \mathcal{D}$, i.e., the distribution  $\mathcal{D}$ is invariant under the Levi-Civita connection $\nabla$. Thus, $\mathcal{D}$ is a null-parallel distribution and so $(M,g,f,\lambda)$ is locally a Walker manifold. Hence, we can state that:

\begin{theorem}\label{thm:2}
Let $(M, g, f )$ be a  four-dimensional half conformally flat gradient $f$-almost Ricci
soliton of neutral signature. Then if $||\nabla f|| = 0$,  then $(M,g)$ is locally a {Walker manifold}, where $r=4\lambda -2 \frac{\Delta f}{f}$.
\end{theorem}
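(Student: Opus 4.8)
The plan is to argue entirely locally near a point $p$ where $\nabla f \neq 0$ (and $f\neq 0$, so that the fundamental equation $Ric + f\,Hessf = \lambda g$ of the gradient $f$-almost Ricci soliton may be divided by $f$). Since the hypothesis is $||\nabla f|| = 0$ while $\nabla f \neq 0$, the first move is to complete $\nabla f$ to a local pseudo-orthonormal frame $\mathcal{B} = \{\nabla f, U, V, W\}$ whose only nonzero metric components are $g(\nabla f, V) = g(U, W) = 1$. Differentiating $g(\nabla f, \nabla f) = 0$ gives $Hessf(\nabla f, X) = 0$ for every $X$, i.e. $hes_f(\nabla f) = 0$; substituting this into the fundamental equation yields $Q(\nabla f) = \lambda \nabla f$, so $\nabla f$ is a null eigenvector of the Ricci operator with eigenvalue $\lambda$.

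Next I would bring in self-duality. Assuming $W^{-}=0$ (which is no loss of generality after fixing the orientation near $p$), Lemma \ref{lemma:brozos}(2) applied to $\mathcal{B}$ gives $\mathcal{W}(\nabla f, V, X, Y) = \mathcal{W}(U, W, X, Y)$, $\mathcal{W}(\nabla f, W, X, Y) = 0$ and $\mathcal{W}(U, V, X, Y) = 0$ for all $X,Y$. The main computational step is to set $Y = \nabla f$ in each of these three identities and evaluate the left-hand sides with the explicit formula of Lemma \ref{lemma:2}, using $||\nabla f||^2 = 0$ (so every term carrying $||\nabla f||^2$ or a derivative thereof drops out), $hes_f(\nabla f) = 0$ and $Q(\nabla f) = \lambda\nabla f$. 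From the first two relations I expect to obtain $\big(\tfrac{r-4\lambda}{6} + \tfrac{\Delta f}{3f}\big)\,g(\nabla f, X) = 0$ for all $X$, which, since $\nabla f \neq 0$, forces the scalar-curvature identity $r = 4\lambda - 2\tfrac{\Delta f}{f}$ asserted in the theorem. From the third identity, plus the specialization $X = V$ and the fundamental equation, I expect $Ric(U, X) = \lambda g(U, X)$ for all $X$ together with $Ric(W, W) = Ric(\nabla f, W) = 0$; these pin down the Ricci operator in the frame $\mathcal{B}$ to the triangular shape displayed in the matrix $\mathbf{Q}$ above, with the only off-diagonal entries $\alpha,\beta$ sitting in the column attached to $V$.

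The final step is to show that the $2$-dimensional totally null distribution $\mathcal{D} = \mathrm{span}\{\nabla f, U\}$ is parallel. Pseudo-orthonormality of $\mathcal{B}$ gives $g(\nabla_X \nabla f, \nabla f) = g(\nabla_X U, U) = 0$ directly. Since $Ric(U, \cdot) = \lambda g(U, \cdot)$, the fundamental equation gives $f\,Hessf(U, \cdot) = 0$, hence $hes_f(U) = 0$ because $f \neq 0$; using symmetry of the Hessian this yields $g(\nabla_X \nabla f, U) = Hessf(X, U) = g(\nabla_X U, \nabla f) = 0$. Consequently $\nabla_X \nabla f$ and $\nabla_X U$ both lie in $\mathcal{D}$ for every $X$, so $\mathcal{D}$ is a null-parallel plane field and, by definition of a Walker manifold, $(M,g)$ is locally Walker; combined with the scalar-curvature identity from the previous step this is exactly the assertion.

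The step I expect to be most delicate is the triple substitution into the Weyl formula of Lemma \ref{lemma:2}: one must handle the Cotton-tensor term $-\tfrac1f C(X,Y,Z)$ correctly (half-conformal flatness constrains $\mathcal{W}$, while the Cotton tensor enters through its divergence, cf. the passage from \eqref{weyl}--\eqref{cotton} to Lemma \ref{lemma:2}) and check that the functions $\alpha,\beta$ are genuinely left unconstrained. Once the eigenvalue relations for $\nabla f$ and $U$ and the identity $r = 4\lambda - 2\tfrac{\Delta f}{f}$ are secured, the passage to the Walker structure is purely formal.
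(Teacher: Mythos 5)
Your proposal follows essentially the same route as the paper's own argument: the same pseudo-orthonormal frame $\{\nabla f,U,V,W\}$, the observation $hes_f(\nabla f)=0$ giving $Q(\nabla f)=\lambda\nabla f$, the three self-duality identities of Lemma \ref{lemma:brozos}(2) evaluated through Lemma \ref{lemma:2} to obtain $r=4\lambda-2\frac{\Delta f}{f}$, $Ric(U,\cdot)=\lambda g(U,\cdot)$ and $Ric(W,W)=Ric(\nabla f,W)=0$, and finally the parallel totally null plane $\mathcal{D}=\mathrm{span}\{\nabla f,U\}$ (via $hes_f(U)=0$) yielding the Walker structure. The only cosmetic discrepancy is that in the paper the relations $Ric(W,W)=Ric(\nabla f,W)=0$ come from setting $X=V$ in the second identity $\mathcal{W}(\nabla f,W,X,Y)=0$ rather than the third, which does not affect the argument.
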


\begin{remark}
From \cite{guler:3}, if $dimM=4$ and $dimD=2$, the metric $g$  of the Walker manifold $(M,g,D)$ has the neutral signature $(-,-,+,+)$ and in suitable coordinates it has the form
\begin{displaymath}
\mathbf{g(x,y,z,t)} =
\left( \begin{array}{cccc}
0 & 0 & 1 & 0 \\
0 & 0 & 0 & 1 \\
1 & 0 & a(x,y,z,t) & c(x,y,z,t)\\
0 & 1 & c(x,y,z,t) & b(x,y,z,t)\\
\end{array} \right)
\end{displaymath}
for some function $a(x,y,z,t)$, $b(x,y,z,t)$ and $c(x,y,z,t)$, where $D=<\frac{\partial}{\partial x}, \frac{\partial}{\partial y}>$. 
\end{remark}

Some geometric properties of four-dimensional Walker metrics satisfying $c=0$ and $c=$constant were investiged in \cite{batat} and \cite{azimpour}, respectively.  Here we are intereseted in the particular case by choosing $a=c=0$ in the metric $g(x,y,z,t)$ and we  construct the following:

\begin{example}
Let $\{t,x,y,z\}$ be the local coordinates  with respect to the local frame fields $\{\partial_t,\partial_x,\partial_y,\partial_z\}$  and we consider the metric given by 
\begin{equation}
\label{gb metric}
(g_b)=ds^2=2dtdy+2dxdz+bdz^2
\end{equation} 
for some smooth function $b$. Then by long and straightforward calculations, the non-zero components of the Riemannian curvature tensor Ricci tensor are as follows:
\begin{equation}
\label{4 dim Walker riemann curvature}
R(\partial_y,\partial_t)\partial_y=\frac{1}{2}b_{yy}\partial_y, \ \ \ \ \ \ R(\partial_y,\partial_t)\partial_t=\frac{1}{2}bb_{yy}\partial_y-\frac{1}{2}b_{yy}\partial_t
\end{equation}
\begin{align}
\label{4 dim Walker ricci curvature}
Ric(\partial_x,\partial_t)=\frac{1}{2}b_{xy}, \  
Ric(\partial_y,\partial_t)=&\frac{1}{2}b_{yy}, \\ \notag
Ric(\partial_z,\partial_t)=\frac{1}{2}b_{yz}, \ \
 Ric(\partial_t,\partial_t)=&-b_{zx}+\frac{1}{2}bb_{yy}
\end{align}
To construct a gradient $f$-almost Ricci soliton structure on $(M,g_f)$, by using $\eqref{gradient h-almost}$, $\eqref{gb metric}$ and $\eqref{4 dim Walker ricci curvature}$, we need to find the potential function $f$ and soliton function $\lambda$ satisfying following system of differential equations:
\begin{equation}
\label{system.1}
 \left\{ \begin{array}{ll}
\partial_xf_x=0, \ \ \ \ \ \partial_yf_x=0, 
f\partial_zf_x=\lambda, \ \ \
\partial_tf_x-\frac{1}{2}b_xf_y=0, \ \\
\\
\partial_yf_y=0, \ \ \ \ \ \partial_zf_y=0,\\
\\
\frac{1}{2}b_{yy}+f[\partial_tf_y-\frac{1}{2}b_yf_y]=\lambda, \ \\
\\
\partial_zf_z=0, \ \ \
\frac{1}{2}b_{yz}+f[\partial_tf_z-\frac{1}{2}b_zf_y]=0, \\
\\
-b_{xz}+\frac{1}{2}bb_{yy}+f[\partial_tf_t+\frac{1}{2}(b_zf_x-b_tf_y-bb_{yy}f_y+b_xf_z+b_yf_t)]=\lambda b,
\end{array} \right.
\end{equation} 
where all indices denotes the partial derivatives with respect to corresponding coordinates.

First, without lose of generality, we assume that $f$ depends only on $y$.  Then, by the system $\eqref{system.1}$, we obtain  $\lambda =0$, $
f=\alpha y +\beta$;  \ for \ any \ $\alpha,\beta \in \mathbb{R}$, \  and $b$ is a differentiable function of only $y$ satisfying the resulting system of $\eqref{system.1}$.
Hence $(M,g_b, \nabla f,0)$ is the {steady gradient $f$-almost Ricci soliton}, where $f$ and $b$ defined as above.

Secondly, to construct another explicit example we may consider $f$ as a seperable function of $x$ and $z$. In this case, we assume that  $f(x,z)=xz$ and by the system $\eqref{system.1}$, we obtain $\lambda =xz$ and $b=xy^2z+c$, for \ any \ $c \in \mathbb{R}$, which yields  the {non-steady gradient $f$-almost Ricci soliton}  $(M, g_b, \nabla f, \lambda )$.

\end{example}

\section{More Physical Results: Standard Static Spacetimes}

First, to fix the notation, we may give a formal definition of standard static spacetime:
\begin{definition}
Let $(F, \bar{g} )$ be an $n$-dimensional Riemannian manifold and
$h : F \rightarrow (0,\infty)$ be a smooth function. Then $(n+1)$-dimensional product
manifold $M=I\times  F$ furnished with the metric tensor $\tilde{g}= -h^2dt^2 \oplus \bar{g}$ is called
a standard static space-time (briefly $SSS$-time) and it is denoted by $M=I_h \times F$, where $dt^2$ is the
Euclidean metric tensor on $I$. 
\end{definition}
Here throughout this section each object denoted by "tilde" is assumed to be from the $SSS$-time and
each object denoted by "bar" is assumed to be from the fiber $F$.
This is just the warped product $F\times_fI $ with time coordinate written first. Thus, by contrast with a Robertson-Walker spacetime, space remains the same but time is warped, \cite{oneill}. These type of metrics play an important role in the search of solutions of the Einstein field equations so they have been previously studied by many authors.
Two of the most famous examples of standard static space-times are the
Minkowski space-time and the Einstein's static universe \cite{5.b}.
Another well-known examples are  the universal covering space of anti-de Sitter space-time and the Exterior
Schwarzschild space-time (for more details, \cite{13.b}).

In the late of 1980's, Besse \cite{besse} posed an important question about the construction of  Einstein manifolds that have warped product stuctures. More recently, some necessary and sufficient conditions are obtained to construct Einstein warped product manifold. If it is more clearly stated, $(M\times N,{g})$ endowed with the metric ${g}=g_M\oplus e^{\frac{-2f}{m}}{g_N} $ is Einstein if and only if the fiber manifold is Einstein and the $m$-Bakery-Emery Ricci tensor of the base manifold is proportional to its metric, i.e., it is quasi Einstein  \cite{kims}.
Thus, it would be a very interesting idea to extend these studies  to  the generalizations of Ricci solitons  having the standard static spacetime metric structure. This will be the point of view of this section. 

The next lemma includes some geometrical objects such as the Levi-Civita connection and the Ricci tensor  of the standard static spacetime metric:

 \begin{lemma}\cite{oneill}\label{lemma:Ricci curvature of SSS} 
Under the above notations, on the $h$-associated $SSS$-time, for any vector fields $X,Y\in \chi(I)$ and $U,V\in \chi(F)$, one has:
\begin{itemize} 
\item[(i)] the only non-vanishing components of the Levi-Civita connection of $\tilde{\nabla}$ of $M$ are given by:
\begin{itemize}
\item[(1)]$\tilde{\nabla}_XY=\nabla_XY-\frac{\tilde{g}(X,Y)}{h}\nabla h$, \ \
\item[(2)] $\tilde{\nabla}_XV=\tilde{\nabla}_VX=(\frac{Vh}{h})X$, \ \  (3) $\tilde{\nabla}_UV=\bar{\nabla}_UV$, 
\end{itemize}
\item[(ii)] the only non-vanishing components of the  Ricci tensor of $\tilde{Ric}$ of $M$ are given by:
\begin{itemize}
\item[(1)] $\tilde{Ric}(X,Y)=-h\Delta hg(X,Y)$, \   
\item[(2)] $\tilde{Ric}(U,V)=\bar{S}(U,V)-\frac{Hess h(U,V)}{h}$;
\end{itemize}
where  $Hess h$ and $\Delta h$ denotes the Hessian and laplacian of $h$ on $F$, respectively.
\end{itemize} 
Moreover, the scalar curvatures of  $I_h\times F$ is given by $\tilde{r}=\bar{r}-2\frac{\Delta h}{h}$.
\end{lemma}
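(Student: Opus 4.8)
The starting point is the observation, already noted in the text, that the $SSS$-time $M=I_h\times F$ with $\tilde{g}=-h^2dt^2\oplus\bar{g}$ is precisely the warped product $F\times_h I$: the base is the Riemannian factor $(F,\bar{g})$, the fibre is the one-dimensional Lorentzian factor $(I,-dt^2)$, and the warping function is $h\in\mathcal{C}_{>0}^{\infty}(F)$. Everything then follows by specializing the standard warped-product machinery, the only new feature being that the warped factor is one-dimensional, which forces several of the usual terms to drop out. For part (i) I would read off the Levi-Civita connection from the Koszul formula $2\tilde{g}(\tilde{\nabla}_AB,C)=A\tilde{g}(B,C)+B\tilde{g}(A,C)-C\tilde{g}(A,B)+\tilde{g}([A,B],C)-\tilde{g}([A,C],B)-\tilde{g}([B,C],A)$, evaluated on lifts of vector fields from $I$ and from $F$: brackets of an $I$-lift with an $F$-lift vanish, $\tilde{g}$ pairs the two factors to zero, and $\tilde{g}$ evaluated on $I$-vectors depends on $F$ only through $h$, since $\tilde{g}(X,Y)=-h^2dt^2(X,Y)$. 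Feeding in the three possible types of inputs, namely $X,Y\in\chi(I)$, then $X\in\chi(I)$ and $V\in\chi(F)$, then $U,V\in\chi(F)$, reproduces respectively $\tilde{\nabla}_XY=\nabla_XY-\frac{\tilde{g}(X,Y)}{h}\nabla h$ (with $\nabla$ on the right the connection of the flat factor $(I,-dt^2)$), $\tilde{\nabla}_XV=\tilde{\nabla}_VX=\frac{Vh}{h}X$, and $\tilde{\nabla}_UV=\bar{\nabla}_UV$; by bilinearity and the splitting $TM=TI\oplus TF$ every other case decomposes into these three, and no further non-zero terms appear.

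For part (ii) I would avoid recomputing the curvature by hand and instead invoke the warped-product Ricci identities already recalled in the Introduction (the three displayed formulas for $Ric(X,Y)$, $Ric(X,V)$ and $Ric(V,W)$ of $B\times_\varphi F$), under the dictionary: base $B=F$, warped fibre $=I$, warping function $\varphi=h$, fibre dimension $m=\dim I=1$. The mixed formula immediately gives $\tilde{Ric}(U,X)=0$, which is why the listed components are the only non-vanishing ones. The base--base formula gives $\tilde{Ric}(U,V)=\bar{S}(U,V)-\frac{m}{h}Hess\,h(U,V)$, and $m=1$ yields exactly $\bar{S}(U,V)-\frac{1}{h}Hess\,h(U,V)$. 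The fibre--fibre formula gives $\tilde{Ric}(X,Y)=Ric_I(X,Y)-\Big[\frac{\Delta h}{h}+(m-1)\frac{||\nabla h||^2}{h^2}\Big]\tilde{g}(X,Y)$; here $\dim I=1$ kills both $Ric_I$ and the factor $(m-1)$, leaving $\tilde{Ric}(X,Y)=-\frac{\Delta h}{h}\tilde{g}(X,Y)$, which is the claimed $-h\Delta h\,g(X,Y)$ once $\tilde{g}(X,Y)=h^2g(X,Y)$ is used on the warped direction. Alternatively one could compute $\tilde{R}$ directly from part (i) and contract, but this is strictly longer.

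For the scalar curvature I would trace $\tilde{Ric}$ in a $\tilde{g}$-orthonormal frame adapted to $TM=TI\oplus TF$: take the unit timelike field $e_0=h^{-1}\partial_t$, so that $\tilde{g}(e_0,e_0)=-1$, together with a $\bar{g}$-orthonormal frame $e_1,\dots,e_n$ of $F$. Then $\tilde{r}=-\tilde{Ric}(e_0,e_0)+\sum_{a=1}^n\varepsilon_a\tilde{Ric}(e_a,e_a)$, where the first term contributes $-\frac{\Delta h}{h}$ and the sum contributes $\bar{r}-\frac{\Delta h}{h}$ (the trace of $\bar{S}$ minus the trace of $h^{-1}Hess\,h$), whence $\tilde{r}=\bar{r}-2\frac{\Delta h}{h}$. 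I do not expect a genuine obstacle here: the whole statement is O'Neill's warped-product calculus, and the only point that really needs attention is the bookkeeping --- the Lorentzian signature (hence the sign $\varepsilon=-1$ in the trace), the curvature/Laplacian sign conventions ($\Delta=\mathrm{tr}\,Hess$), and the fact that it is the \emph{time} factor, not $F$, that is warped, so that the curvature terms attached to the warped direction collapse because that direction is one-dimensional.
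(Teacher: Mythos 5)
Your proof is correct, and since the paper states this lemma without proof (it is quoted from O'Neill), your derivation --- identifying $I_h\times F$ with the warped product $F\times_h I$ whose one-dimensional fibre is $(I,-dt^2)$, reading the connection off the Koszul formula, specializing the warped-product Ricci formulas to fibre dimension $m=1$, and tracing in an adapted orthonormal frame with $\varepsilon_0=-1$ --- is exactly the standard computation behind the cited result. The only bookkeeping point worth flagging is that the Ricci formulas recalled in the Introduction are stated for fibre dimension $m>1$, but they persist for $m=1$ (the $Ric_F$ and $(m-1)$ terms simply vanish), so your specialization, including the conversion $\tilde{g}(X,Y)=h^2g(X,Y)$ with $g=-dt^2$ that yields $\tilde{Ric}(X,Y)=-h\Delta h\,g(X,Y)$, agrees with the lemma as the paper later uses it.
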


It is known that a smooth manifold $(M^n,g)$ $(n>2)$  is said to be  generalized quasi Einstein manifold \cite{catino} if there exist three smooth functions $f, \alpha$ and $\lambda$ such that
\begin{equation}
\label{generalized quasi Einstein}
Ric+Hessf-\alpha df \otimes df=\lambda g,
\end{equation} 
which is simply denoted by $(M^n,g,\nabla f, \lambda)$.
If $\alpha =\frac{1}{m}$, for positive integer $0<m<\infty$ then $M$ is called an $m$-generalized quasi Einstein manifold. If in addition, $\lambda \in \mathbb{R}$, then $M$ reduces to an $m$-quasi Einstein manifold. 
Moreover, defining a function $\varphi=e^{\frac{-f}{m}}$, we get  $\frac{m}{\varphi}Hess\varphi=-Hessf+\frac{1}{m}df \otimes df$. By using this relation in the fundamental equation, we obtain 
\begin{equation}
\label{3.3.3.}
{Ric}-\frac{m}{\varphi} Hess\varphi=\lambda {g}.
\end{equation}
Thus, all $m$-generalized quasi Einstein manifold is a gradient $(\frac{-m}{\varphi})$-almost Ricci soliton.
After this transformation, we can prove the following:

\begin{theorem}\label{thm:3.3.1}
Let $(M=I_h\times F, \tilde{g}, f,\lambda) $ be an  $m$-generalized quasi Einstein standard static spacetime. Then, the potential function  $f$ depends only on the fiber $(F,\bar{g})$ which satisfies the Ricci-Hessian class type equation 
\begin{equation}
\label{3.3.10}
\bar{Ric}+Hess\psi-d\psi \otimes d\psi=\lambda \bar{g}+\frac{m}{\varphi}Hess\varphi .
\end{equation}
\end{theorem}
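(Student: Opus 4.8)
The plan is to start from the defining equation of an $m$-generalized quasi Einstein structure on the $SSS$-time $M=I_h\times F$, namely
\[
\widetilde{Ric}+\widetilde{Hess}\,f-\tfrac{1}{m}\,df\otimes df=\lambda\,\tilde g,
\]
and evaluate it on the three types of pairs of vector fields that appear in Lemma \ref{lemma:Ricci curvature of SSS}: $(X,Y)$ with $X,Y\in\chi(I)$, mixed pairs $(X,V)$, and fiber pairs $(U,V)$ with $U,V\in\chi(F)$. The first step is to compute $\widetilde{Hess}\,f$ in terms of the fiber data using part (i) of Lemma \ref{lemma:Ricci curvature of SSS}. Writing $\widetilde{Hess}\,f(A,B)=AB(f)-(\widetilde\nabla_AB)(f)$ and substituting the connection formulas (1)--(3) will express $\widetilde{Hess}\,f$ through $\overline{Hess}\,f$, through $\nabla h$ on $F$, and through the $I$-derivatives of $f$.

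Next I would exploit the mixed and the $I$-components to show $f$ descends to $F$. Evaluating the fundamental equation on $(X,V)$ with $X=\partial_t$: here $\widetilde{Ric}(X,V)=0$ by Lemma \ref{lemma:Ricci curvature of SSS}(ii), $df\otimes df(X,V)=X(f)V(f)$, and from $\widetilde\nabla_XV=(Vh/h)X$ one gets $\widetilde{Hess}\,f(X,V)=XV(f)-(Vh/h)X(f)$. Comparing with the $\partial_t\partial_t$-component and using that $h$ and $\bar g$ do not depend on $t$, one deduces that $X(f)=\partial_t f$ must vanish, i.e. $f=f(F)$ only; this is the assertion that the potential depends only on the fiber. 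Once $\partial_t f=0$, the mixed equation is automatically satisfied and the $(X,Y)$-component reduces to $-h\,\Delta h\,g(X,Y)-\tfrac{1}{h}g(X,Y)\,\langle\nabla h,\nabla f\rangle=\lambda\,g(X,Y)$, giving one scalar constraint relating $\Delta h$, $\langle\nabla h,\nabla f\rangle$ and $\lambda$ on $F$ (this will be needed to eliminate $\lambda$-ambiguities but is not the main point).

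The decisive step is the $(U,V)$-component. With $f=f(F)$ and $\widetilde\nabla_UV=\bar\nabla_UV$ we get $\widetilde{Hess}\,f(U,V)=\overline{Hess}\,f(U,V)$, and Lemma \ref{lemma:Ricci curvature of SSS}(ii)(2) gives $\widetilde{Ric}(U,V)=\bar{Ric}(U,V)-\tfrac{1}{h}\overline{Hess}\,h(U,V)$. Hence the fiber equation becomes
\[
\bar{Ric}+\overline{Hess}\,f-\tfrac{1}{m}\,df\otimes df=\lambda\,\bar g+\tfrac{1}{h}\overline{Hess}\,h .
\]
Now I introduce $\psi:=f$ and recall from the discussion preceding the theorem that for $\varphi=e^{-f/m}$ one has $\tfrac{m}{\varphi}\,\overline{Hess}\,\varphi=-\overline{Hess}\,f+\tfrac{1}{m}\,df\otimes df$; however, since the left-hand side already displays $\overline{Hess}\,f-\tfrac1m df\otimes df$ directly, it is cleaner to keep $\psi=f$ and simply rename, and treat $h$ as the warping function playing the role of $\varphi$ on the right. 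Matching the term $\tfrac1h\overline{Hess}\,h$ with $\tfrac{m}{\varphi}\overline{Hess}\,\varphi$ (i.e. identifying the $SSS$-warping $h$ with the soliton potential $\varphi=e^{-f/m}$ of the ambient structure, which is exactly the transformation \eqref{3.3.3.}) yields precisely
\[
\bar{Ric}+Hess\,\psi-d\psi\otimes d\psi=\lambda\,\bar g+\tfrac{m}{\varphi}\,Hess\,\varphi,
\]
which is \eqref{3.3.10}.

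\textbf{Main obstacle.} The routine part is the connection and Hessian bookkeeping; the one genuinely delicate point is the separation-of-variables argument showing $\partial_t f\equiv 0$. One must be careful that the $(X,Y)$-equation and the mixed $(X,V)$-equation together force this, rather than merely constraining $f$ on each slice; this uses essentially that $h$ is $t$-independent and that $\partial_t$ is a Killing-like direction for the warped structure, so that $\widetilde{Hess}\,f(\partial_t,\partial_t)$ contains $-\tfrac{1}{h}g(\partial_t,\partial_t)\langle\nabla h,\nabla f\rangle$ with no $\partial_t^2 f$ surviving after using $g(\partial_t,\partial_t)=-h^2$. A secondary subtlety is keeping the two a-priori different functions named $\varphi$ (the one from $\psi=f$ via $e^{-f/m}$) versus the warping $h$ from getting conflated; the statement \eqref{3.3.10} is to be read with $\varphi$ denoting the $SSS$-warping function $h$ reinterpreted through the transformation \eqref{3.3.3.}, and I would add a sentence making that identification explicit so the final formula is unambiguous.
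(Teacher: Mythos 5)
Your overall strategy (evaluate the fundamental equation on the three types of pairs from Lemma \ref{lemma:Ricci curvature of SSS}, use the mixed component to push $f$ down to the fiber, then read off the fiber equation) is the same as the paper's, and the fiber equation you reach, $\bar{Ric}+\overline{Hess}f-\frac{1}{m}df\otimes df=\lambda\bar{g}+\frac{1}{h}\overline{Hess}h$, is correct. The genuine gap is in the last step, where \eqref{3.3.10} is supposed to appear: you have the roles of $\psi$ and $\varphi$ reversed. In the theorem, $\varphi=e^{-f/m}$ is built from the potential function (this is exactly the transformation preceding \eqref{3.3.3.}, giving $\frac{m}{\varphi}Hess\varphi=-Hess f+\frac{1}{m}df\otimes df$), while $\psi=-\ln h$ is built from the warping function, so that by $\nabla^{2}(\ln h)=\frac{1}{h}\nabla^{2}h-\frac{1}{h^{2}}dh\otimes dh$ one has $Hess\psi-d\psi\otimes d\psi=-\frac{1}{h}Hess h$; substituting these two identities into your fiber equation yields \eqref{3.3.10} at once. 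Your choice $\psi:=f$ makes the left-hand side $\bar{Ric}+Hess f-df\otimes df$, which differs from what you actually derived by $(1-\frac{1}{m})df\otimes df$, and your proposed cure, ``identifying'' the warping function $h$ with $\varphi=e^{-f/m}$, is an additional hypothesis not contained in the theorem (these are a priori unrelated functions); even granting it, $\frac{1}{h}Hess h\neq\frac{m}{\varphi}Hess\varphi$ unless $m=1$. So as written the final matching does not produce \eqref{3.3.10}.

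The second gap is the separation step itself. You assert $\partial_t f=0$ by ``comparing'' the mixed and the $\partial_t\partial_t$ components, and your supporting remark that no $\partial_t^{2}f$ survives in $\widetilde{Hess}f(\partial_t,\partial_t)$ is false: as long as $f$ may depend on $t$ one has $\widetilde{Hess}f(\partial_t,\partial_t)=\partial_t^{2}f-h\,\bar{g}(\nabla h,\nabla f)$, and the mixed equation in your $f$-form reads $\partial_t(Vf)-\frac{Vh}{h}\partial_t f-\frac{1}{m}\partial_t f\,Vf=0$, which does not by itself force $\partial_t f=0$. The paper sidesteps this by first rewriting the structure in the form \eqref{3.3.3}, so that the mixed component gives $\widetilde{Hess}\varphi(X,V)=0$ with no quadratic term; then, decomposing $grad_{\tilde g}\varphi$ into its parts tangent to $I$ and to $F$ and using the connection formulas of Lemma \ref{lemma:Ricci curvature of SSS}(i), this vanishing becomes $h\,dh(V)\,g(X,tan(grad_{\tilde g}\varphi))=0$, and non-constancy of $h$ (the non-trivial case) forces the tangential part of $grad_{\tilde g}\varphi$ to vanish, i.e. $\varphi$, hence $f$, depends only on the fiber. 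If you prefer to stay with the $f$-form you must supply an actual argument of this kind; at present the decisive step is only asserted.
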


\begin{proof}
Since $(M=I_h\times F, \tilde{g}, f,\lambda) $ is an $m$-generalized quasi Einstein $SSS$-time, by $\eqref{3.3.3.}$ its Ricci tensor satisfies 
\begin{equation}
\label{3.3.3}
\tilde{Ric}-\frac{m}{\varphi} \tilde{Hess}\varphi=\lambda \tilde{g}.
\end{equation}
In view of the equation $\eqref{3.3.3}$, Lemma $\ref{lemma:Ricci curvature of SSS}$-(ii) and the definition of the $SSS$-time metric, for all $X\in \chi(I)$ and $V\in \chi(F)$ we have $ (\tilde{Hess}\varphi)(X,V)=0$. Then, again by using Lemma $\ref{lemma:Ricci curvature of SSS}$-(i) we get the following separation for the tangential and the normal parts of the vectors
\begin{align}
\label{3.3.4}
\tilde{Hess}( \varphi)(X,V)
=&\tilde{g}(\tilde{\nabla}_X tan(grad_{\tilde{g}}\varphi),V)+\tilde{g}(\tilde{\nabla}_X nor(grad_{\tilde{g}}\varphi),V)\\ \notag
=&hdh(V)g(X,tan(grad_{\tilde{g}}\varphi))=0.
\end{align}
From the last row, since $h>0$, we have either $dh(V)=0$, for any $V\in \chi(F)$ or $g(X,tan(grad_{\tilde{g}}\varphi))=0$.  
But the first case implies $h$ is a constant, which makes the  $SSS$-time trivial. Thus, we have $g(X,tan(grad_{\tilde{g}}\varphi))=0$, for all $X$. Therefore, the function $\varphi$ and so the potential function $f$ depends only on fiber.   
That is, $nor(grad_{\tilde{g}}\varphi)=grad_{\bar{g}}\varphi$. Since $g=-(dt)^2$, we may choose $X=Y=\partial_t$. Then, by $\eqref{3.3.3}$, we obtain
\begin{equation}
\label{3.3.5}
\tilde{Ric}(\partial_t,\partial_t)-\frac{m}{\varphi} \tilde{Hess}\varphi(\partial_t,\partial_t)=\lambda \tilde{g}(\partial_t,\partial_t),
\end{equation}
from which  following relation holds:
\begin{equation}
\label{3.3.6}
\Delta h+\frac{m}{\varphi}(grad_{\bar{g}}\varphi)(h)=-h\lambda .
\end{equation}
Similarly, for all $V,W\in \chi(F)$, we get
\begin{equation}
\label{3.3.7}
\tilde{Ric}(V,W)-\frac{m}{\varphi} \tilde{Hess}\varphi(V,W)=\lambda \tilde{g}(V,W).
\end{equation}
Again, by the definition of the metric $\tilde{g}$ and the  Lemma $\ref{lemma:Ricci curvature of SSS}$-(ii), $\eqref{3.3.7}$ yields
\begin{equation}
\label{3.3.8}
\bar{Ric}(V,W)-\frac{1}{h}Hessh(V,W)-\frac{m}{\varphi} Hess\varphi(V,W)=\lambda \bar{g}(V,W).
\end{equation}
Note that the following relation holds for any smooth function $h$;
\begin{equation}
\label{3.3.9}
\nabla^2(lnh)=\frac{1}{h}\nabla^2h-\frac{1}{h^2}dh\otimes dh.
\end{equation}
If we take $\psi=-lnh$, then by virtue of $\eqref{3.3.9}$, $\eqref{3.3.8}$ yields the equation $\eqref{3.3.10}$.
 Hence the proof is completed.
\end{proof}

Then, we shall actually prove a slightly stronger theorem, which is the converse case of the above theorem. This also allows us to obtain sufficient conditions about in which conditions we can get standard static metric structure:
\begin{theorem}\label{thm:3.3.2}
Let $(F^n,\bar{g}, f,-\frac{2\Delta h+h \Delta \varphi}{h})$ be a non-steady $m$-generalized quasi Einstein manifold. Then the standard static spacetime $(I_h\times F, \tilde{g}, \nabla \varphi, -\frac{2\Delta h+h \Delta \varphi}{h})$ is a gradient almost Ricci soliton, where  $\varphi=\frac{f}{2}$ and $h=e^{-\varphi}$.
\end{theorem}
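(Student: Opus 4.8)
The plan is to verify the gradient almost Ricci soliton identity $\tilde{Ric}+\tilde{Hess}\,\varphi=\lambda\,\tilde g$, with $\lambda=-\tfrac{2\Delta h+h\Delta\varphi}{h}$, directly on $M=I_h\times F$, by testing both sides against the three natural kinds of vector-field pairs: $(\partial_t,\partial_t)$, $(\partial_t,V)$ with $V\in\chi(F)$, and $(V,W)$ with $V,W\in\chi(F)$. The geometric input is entirely Lemma~\ref{lemma:Ricci curvature of SSS}, which supplies $\tilde\nabla$ and $\tilde{Ric}$, together with the elementary consequences of the hypotheses $h=e^{-\varphi}$ and $\varphi=\tfrac f2$: since $\varphi$ is the potential of the $m$-generalized quasi Einstein structure on $F$, it is a function of the fibre only, and on $(F,\bar g)$ one has $\nabla h=-h\,\nabla\varphi$, $Hess\,h=h\,(d\varphi\otimes d\varphi-Hess\,\varphi)$ and $\Delta h=h\,(\|\nabla\varphi\|^2-\Delta\varphi)$. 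Since $\varphi$ is $t$-independent and the mixed components of $\tilde g$ vanish, $\tilde\nabla\varphi$ coincides with the $\bar g$-gradient of $\varphi$, so the stated potential field makes sense on $M$.

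First I would compute $\tilde{Hess}\,\varphi$ from Lemma~\ref{lemma:Ricci curvature of SSS}(i). As $\tilde\nabla_VW=\bar\nabla_VW$, one gets $\tilde{Hess}\,\varphi(V,W)=Hess\,\varphi(V,W)$; as $\tilde\nabla_{\partial_t}V=(Vh/h)\,\partial_t$ and $\partial_t\varphi=0$, one gets $\tilde{Hess}\,\varphi(\partial_t,V)=0$; and as $\tilde\nabla_{\partial_t}\partial_t=h\nabla h$, one gets $\tilde{Hess}\,\varphi(\partial_t,\partial_t)=-(h\nabla h)\varphi=-h\,d\varphi(\nabla h)=h^2\|\nabla\varphi\|^2$, the last step using $\nabla h=-h\nabla\varphi$. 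Since $h$ and $\varphi$ live on $F$, $\tilde g(\partial_t,V)=0$ and $\tilde g(\partial_t,\partial_t)=-h^2$; combined with $\tilde{Ric}(\partial_t,V)=0$ from Lemma~\ref{lemma:Ricci curvature of SSS}(ii), the mixed block of the soliton equation reads $0=0$ and is settled.

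Next I would dispose of the two remaining blocks. For $(\partial_t,\partial_t)$: Lemma~\ref{lemma:Ricci curvature of SSS} gives $\tilde{Ric}(\partial_t,\partial_t)=h\Delta h$, so the equation becomes $h\Delta h+h^2\|\nabla\varphi\|^2=-\lambda h^2$, i.e. $\lambda=-\tfrac{\Delta h}{h}-\|\nabla\varphi\|^2$; substituting $\Delta h=h(\|\nabla\varphi\|^2-\Delta\varphi)$ rewrites this as $\lambda=\Delta\varphi-2\|\nabla\varphi\|^2=-\tfrac{2\Delta h+h\Delta\varphi}{h}$, which is exactly the prescribed soliton function, so this block holds identically. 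For $(V,W)$: Lemma~\ref{lemma:Ricci curvature of SSS}(ii) gives $\tilde{Ric}(V,W)=\bar{Ric}(V,W)-\tfrac1h Hess\,h(V,W)$; inserting $Hess\,h=h(d\varphi\otimes d\varphi-Hess\,\varphi)$ and adding $\tilde{Hess}\,\varphi(V,W)=Hess\,\varphi(V,W)$, the equation collapses to $\bar{Ric}+2\,Hess\,\varphi-d\varphi\otimes d\varphi=\lambda\,\bar g$ on $(F,\bar g)$; rewriting through $f=2\varphi$ this is $\bar{Ric}+Hess\,f-\tfrac1m\,df\otimes df=\lambda\,\bar g$, which is precisely the $m$-generalized quasi Einstein equation satisfied by $(F,\bar g,f)$ with soliton function $\lambda=-\tfrac{2\Delta h+h\Delta\varphi}{h}$. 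Since all three blocks are verified, $(I_h\times F,\tilde g,\nabla\varphi,\lambda)$ is a gradient almost Ricci soliton; the non-steady hypothesis forces $\varphi$, hence $h$, to be non-constant, so $\lambda\not\equiv 0$ and the warping is genuine.

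The step I expect to be the main obstacle is the fibre block: one must expand $Hess\,h$ for $h=e^{-\varphi}$ and check that $\bar{Ric}+2\,Hess\,\varphi-d\varphi\otimes d\varphi$ is exactly the $m$-generalized quasi Einstein tensor of $(F,\bar g,f)$ and not merely proportional to it — in particular that the coefficient of $d\varphi\otimes d\varphi$ produced by $Hess\,h/h$ matches the $\tfrac1m\,df\otimes df$ term — which is exactly what pins down the normalizations $\varphi=\tfrac f2$, $h=e^{-\varphi}$ and the identification of the soliton function. By contrast, the time block reduces to an identity after the single substitution $\Delta h=h(\|\nabla\varphi\|^2-\Delta\varphi)$, and the mixed block is immediate; the only bookkeeping subtlety is to keep $\Delta$, $Hess$ and $\|\cdot\|$ on $(F,\bar g)$ while remembering that $\tilde g(\partial_t,\partial_t)=-h^2$ carries the warping factor.
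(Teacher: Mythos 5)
Your proposal is correct and takes essentially the same route as the paper: a blockwise verification of $\tilde{Ric}+\tilde{Hess}\,\varphi=\lambda\,\tilde g$ on the $(\partial_t,\partial_t)$, mixed, and fibre components using Lemma~\ref{lemma:Ricci curvature of SSS} together with the identity $\frac{1}{h}Hess\,h=d\varphi\otimes d\varphi-Hess\,\varphi$ for $h=e^{-\varphi}$, the only cosmetic difference being that the paper runs the fibre computation in the opposite direction (first rewriting the quasi Einstein equation as $\bar{Ric}+Hess\,\varphi=\lambda\,\bar g+\frac{1}{h}Hess\,h$ and then lifting it). The coefficient matching you flag as pinning down the normalization is exactly the point the paper handles by setting $m=4\alpha$ and taking $\alpha=1$, i.e.\ $m=4$ when $h=e^{-\varphi}$.
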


\begin{proof}
We assume that $(F^n,\bar{g}, f,\lambda)$ be a non-steady $m$-generalized quasi Einstein metric. Thus, we have
\begin{equation}
\label{3.3.11}
\bar{Ric}+Hessf-\frac{1}{m}df \otimes df=\lambda \bar{g}; \ \ \ \lambda \in C^{\infty}(F).
\end{equation}
Define $\varphi=\frac{f}{2}$, $h=e^{\frac{-\varphi}{\alpha}}$ and let $m=4\alpha$. Then, we get
\begin{equation}
\label{3.3.12}
\frac{\alpha}{h}\nabla^2h=\frac{1}{\alpha}d\varphi \otimes d\varphi-\nabla^2\varphi .
\end{equation}
Then, in view of $\eqref{3.3.12}$, $\eqref{3.3.11}$ can be written as follows (when $\alpha=1$);
\begin{equation}
\label{3.3.13}
\bar{Ric}+\nabla^2\varphi=\lambda \bar{g}+\frac{1}{h}\nabla^2h.
\end{equation}
By virtue of $\eqref{3.3.13}$ and the Lemma $\ref{lemma:Ricci curvature of SSS}$,  for all $V,W\in \chi(F)$ the relation
\begin{equation}
\label{3.3.14}
\tilde{Ric}(V,W)+\nabla^2\varphi(V,W)=\lambda\tilde{g}(V,W)
\end{equation}
holds. Since for all $X\in \chi(I)$ and $V\in \chi(F)$, $\tilde{g}(X,V)=0$
 and $\tilde{Ric}(X,V)=0$, the relation 
\begin{equation}
\label{3.3.15}
\tilde{Ric}(X,V)+\nabla^2\varphi(X,V)=\lambda\tilde{g}(X,V)
\end{equation}
trivially holds. Also, from the Lemma $\ref{lemma:Ricci curvature of SSS}$ and the Theorem $\ref{thm:3.3.1}$,  for all $X,Y\in \chi(M)$, i.e., $X=Y=\partial_t$,
\begin{equation}
\label{3.3.16}
(\nabla^2\varphi)(\partial_t,\partial_t)=\tilde{g}(\tilde{\nabla}_{\partial_t}\nabla\varphi,\partial_t)=-h\nabla\varphi(h).
\end{equation}
As $\Delta h=div(\nabla h)$, from assumptions, we have $\Delta h+\nabla\varphi(h)=-h\Delta \varphi$.
Thus, if we choose the soliton constant $\lambda =-\frac{2\Delta h+h \Delta \varphi}{h}$, the relation 
\begin{equation}
\label{3.3.17}
\tilde{Ric}(\partial_t,\partial_t)+\nabla^2\varphi(\partial_t,\partial_t)=\lambda\tilde{g}(\partial_t,\partial_t)
\end{equation}
also holds. Therefore, in each case $(I_h\times F, \tilde{g}, \nabla \varphi, -\frac{2\Delta h+h \Delta \varphi}{h})$ satisfies the fundamental equation of a gradient almost Ricci soliton, which completes the proof.
\end{proof}

On the other hand, if we assume that   $(I_h\times F^n,\tilde{g}, f, \lambda)$ is an  $m$-generalized quasi Einstein $SSS$-time,  by virtue of $\eqref{3.3.12}$, we have
\begin{equation}
\label{3.3.18}
\tilde{Ric}+\nabla^2\varphi=\lambda \tilde{g}+\frac{\alpha}{h}\nabla^2h.
\end{equation}
Additionally, we assume that $(F^n,\bar{g}, f, \lambda)$ satisfies the relation
\begin{equation}
\label{3.3.19}
-2\lambda d\varphi+d[(2-n-\alpha)\lambda+||\nabla \varphi||^2-\Delta \varphi-\frac{\alpha}{h}\nabla\varphi(h)]=0.
\end{equation}
Contracting $\eqref{3.3.18}$, the scalar curvature is obtained as follows:
\begin{equation}
\label{3.3.20}
\tilde{r}=n\lambda+\frac{\alpha}{h}\Delta h-\Delta \varphi
\end{equation}
and so its differential is
\begin{equation}
\label{3.3.21}
d\tilde{r}=nd\lambda+\frac{\alpha}{h}d(\Delta h)-\frac{\alpha}{h^2}dh\Delta h-d(\Delta \varphi).
\end{equation}
Notice that  for all smooth  function $\varphi$ and $(0,2)$-tensor $T$, the following general facts are well known:
\begin{equation}
\label{3.3.22}
div(\varphi T)=\varphi divT+T(\nabla\varphi,\cdot).
\end{equation}
\begin{equation}
\label{3.3.23}
div(\nabla^2 \varphi)=Ric(\nabla\varphi,\cdot)+(\Delta\varphi) \ \ \ \textrm{and} \ \  \ \frac{1}{2}d(||\nabla\varphi||^2)=\nabla^2\varphi(\nabla\varphi,\cdot).
\end{equation}
Taking divergence of $\eqref{3.3.18}$ and using the equations $\eqref{3.3.19}$, $\eqref{3.3.22}$, $\eqref{3.3.23}$ and the contracted second Bianchi Identity, we get $d[\lambda h^2+h\Delta h+(\alpha-1)||\nabla h||^2-h\nabla\varphi(h)]=0$ and so  for some constant $c_0$, we have
\begin{equation}
\label{3.3.24}
 \lambda h^2+h\Delta h+(\alpha-1)||\nabla h||^2-h\nabla\varphi(h)=c_0.
\end{equation}
Now, we consider the elliptic operator of second order given by
\begin{equation}
\label{3.3.25}
\varepsilon(\cdot)=\Delta(\cdot)-\nabla\varphi(\cdot)+\frac{\alpha-1}{h}\nabla h(\cdot).
\end{equation}
Then by virtue of $\eqref{3.3.24}$, $\eqref{3.3.25}$ yields
\begin{equation}
\label{3.3.26}
\Delta(h)-\nabla\varphi(h)+\frac{\alpha-1}{h}||\nabla h||^2=\frac{c_0-\lambda h^2}{h}.
\end{equation} 
If we assume that $h$ is bounded function which has maximum and minimum values at some points $p$ and  $q$ on $F^n$, respectively, then we have
\begin{equation}
\label{3.3.27}
\nabla h(p)=0=\nabla h(q) \ \ \ \textrm{and} \ \ \ \Delta h(p)\leqslant 0 \leqslant \Delta h(q).
\end{equation}
If we take $\lambda \leqslant 0$ such that $\lambda (p)\leqslant \lambda (q)$, then by $\eqref{3.3.24}$ and $\eqref{3.3.27}$, we get
\begin{equation}
\label{3.3.28}
c_0-\lambda (p)h^2(p)=c_0-\lambda (q)h^2(q).
\end{equation}
Now, if $\lambda(q)\neq 0$, then by $\eqref{3.3.28}$, we have $h^2(p)\leqslant h^2(q)$ which implies for all $p,q\in F$, $h(p)=h(q)$. Therefore $h$ is constant and so is $\lambda$. 
If $\lambda(p)=0$, then by $\eqref{3.3.24}$, $c_0=0$. In this case, from $\eqref{3.3.24}$ and $\eqref{3.3.25}$ we obtain  $\varepsilon(h)\geqslant 0$. 
It is known that the Strong Maximum Principle tells us that for a solution of an elliptic
equation, extrema can be attained in the interior if and only if the function is a constant. Hence we conclude that $h$ is constant and so $\lambda =0$. As $h$ is constant, the potential function $f$ of $(M=I_h\times F^n, \tilde{g},f, \lambda)$ is also constant. Thus, $M$ is trivial. 

Hence together with this result, we have obtained  an answer  to the question posed by Besse in \cite{besse}, for $m$-generalized quasi Einstein manifolds with $\lambda \leqslant 0$, which also extends the results obtained for the warped product manifolds in \cite{2015pina} to the standard static spacetimes. Therefore, we can state the following:

\begin{corollary}\label{thm:3.3}
Let $(M=I_h\times F^n, \tilde{g},f, \lambda)$ be an  $m$-generalized quasi Einstein standard static spacetime satisfying $\eqref{3.3.19}$ with bounded warping function $h$ and  $\lambda \leqslant 0$ such that $\lambda(p)\leqslant \lambda (q)$, where $p$ and $q$ are maximum and minimum points of $h$, respectively. Then, $(M=I_h\times F^n, \tilde{g},f, \lambda)$  is trivial  Lorentzian product manifold.  
\end{corollary}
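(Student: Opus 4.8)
The plan is to treat the warping function $h$ as the unknown and close the argument with a strong maximum principle, the two sign hypotheses on $\lambda$ being exactly what makes this work. First I would use the substitution $\varphi=\frac{f}{2}$, $h=e^{-\varphi/\alpha}$ together with \eqref{3.3.12} to rewrite the $m$-generalized quasi Einstein condition on the $SSS$-time as \eqref{3.3.18}, namely $\tilde{Ric}+\nabla^2\varphi=\lambda\tilde g+\frac{\alpha}{h}\nabla^2 h$; by the reasoning behind Theorem~\ref{thm:3.3.1} the functions $\varphi$, $f$, $h$ may be taken to live on the fiber only. Contracting \eqref{3.3.18} gives the scalar curvature \eqref{3.3.20}, and differentiating it gives \eqref{3.3.21}.

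Next I would take the divergence of \eqref{3.3.18}, using the product rule \eqref{3.3.22}, the identities \eqref{3.3.23}, the contracted second Bianchi identity, and the auxiliary hypothesis \eqref{3.3.19}. The role of \eqref{3.3.19} is to cancel all the non-exact contributions, leaving $d\big[\lambda h^2+h\Delta h+(\alpha-1)\|\nabla h\|^2-h\nabla\varphi(h)\big]=0$, i.e.\ the first integral \eqref{3.3.24} with a constant $c_0$. Rewriting \eqref{3.3.24} gives the elliptic equation \eqref{3.3.26} for $h$, associated to the second-order operator $\varepsilon$ of \eqref{3.3.25}.

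Now I would use that $h$ is bounded, so it attains a maximum at some $p$ and a minimum at some $q$, where $\nabla h=0$ and $\Delta h(p)\le 0\le\Delta h(q)$ as in \eqref{3.3.27}. Evaluating \eqref{3.3.24} at $p$ and at $q$ removes the gradient terms and, with $h>0$ and the sign of $\Delta h$, gives $\lambda(q)h^2(q)\le c_0\le\lambda(p)h^2(p)$. Combined with $h^2(p)\ge h^2(q)$, the hypotheses $\lambda\le 0$ and $\lambda(p)\le\lambda(q)$ force the chain $\lambda(p)h^2(p)\le\lambda(q)h^2(p)\le\lambda(q)h^2(q)\le\lambda(p)h^2(p)$ to collapse to equalities, which is \eqref{3.3.28}; then either $h^2(p)=h^2(q)$, so $h$ is constant, or $\lambda(q)=0$, which in turn forces $\lambda(p)=0$ and $c_0=0$, in which case \eqref{3.3.26} becomes $\varepsilon(h)=-\lambda h\ge 0$ and the strong maximum principle for $\varepsilon$ again makes $h$ constant.

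Finally, with $h$ constant the metric $\tilde g=-h^2dt^2\oplus\bar g$ becomes, after rescaling the $I$-coordinate, the Lorentzian product $-dt^2\oplus\bar g$; constancy of $h$ also makes $\lambda$ constant via \eqref{3.3.24} and then $f$ (equivalently $\varphi$) constant, so the $m$-generalized quasi Einstein structure is trivial, as claimed. I expect the main obstacle to be the divergence bookkeeping that produces \eqref{3.3.24} — one has to check that the left-hand side of \eqref{3.3.19} is precisely the failure of $\lambda h^2+h\Delta h+(\alpha-1)\|\nabla h\|^2-h\nabla\varphi(h)$ to be closed — with a secondary subtlety in verifying that $\varepsilon$ in \eqref{3.3.25} carries no zeroth-order term obstructing the strong maximum principle in the borderline case $\lambda(p)=0$.
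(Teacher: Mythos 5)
Your proposal is correct and follows essentially the same route as the paper's own argument (the discussion preceding the corollary): take the divergence of \eqref{3.3.18}, use \eqref{3.3.19} with the Bianchi identity to obtain the first integral \eqref{3.3.24}, evaluate at the maximum and minimum of $h$, and invoke the strong maximum principle for the operator \eqref{3.3.25} in the borderline case $\lambda(p)=0$. Your explicit inequality chain collapsing to \eqref{3.3.28} is in fact a clearer rendering of a step the paper only asserts.
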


\begin{acknowledgements}
The author  would like to thank  anonymous referees for all their useful remarks and comments. Also, the  author is supported by The Scientific and Technological Research Council of Turkey (T\"{U}B\.{I}TAK) BIDEB-2218  postdoctoral programme (Grant Number: 1929B011800249).

\end{acknowledgements}

%
%



\end{document}